\newtheorem{theorem}{Theorem}
\newtheorem{lemma}{Lemma}
\newtheorem{proposition}{Proposition}
\theoremstyle{definition}
\newtheorem{definition}{Definition}
\newtheorem{remark}{Remark}
\newcommand{\SSS}{\mathbb{S}}
\newcommand{\FF}{\mathbb{F}}
\newcommand{\Fq}{\mathbb{F}_q}
\newcommand{\Fqn}{\mathbb{F}_{q^n}}
\newcommand{\D}{\mathcal D}
\newcommand{\cI}{\mathcal I}
\newcommand{\cP}{\mathcal P}
\newcommand{\cL}{\mathcal L}
\newcommand{\cH}{\mathcal H}
\newcommand{\dinv}{d^{\mathrm{inv}}}
\def\Fq{{\mathbb{F}}_q}
\def\T{\mathbb{T}}
\def\PG{\mathrm{PG}}
\def\AG{\mathrm{AG}}
\def\dim{\mathrm{dim}}
\newtheorem{conjecture}{Conjecture}
\newcommand{\rank}{\mathrm{rank}}
\begin{document}
\title{On Translation Hyperovals in Semifield Planes}

\author{Kevin Allen and John Sheekey}

\maketitle

\begin{abstract}
In this paper we demonstrate the first example of a finite translation plane which does not contain a translation hyperoval, disproving a conjecture of Cherowitzo. The counterexample is a semifield plane, specifically a Generalised Twisted Field plane, of order $64$. We also relate this non-existence to the covering radius of two associated rank-metric codes, and the non-existence of scattered subspaces of maximum dimension with respect to the associated spread.
\end{abstract}

\section{Introduction}

Hyperovals are extremal combinatorial objects in projecive planes; namely a hyperoval is a set of $q+2$ points in which no three lie on a common line. We refer to Section \ref{sec:def} for formal definitions. 

Hyperovals have attracted much attention over the years, particularly in the case of Desarguesian planes. Papers regarding existence, construction, and classification abound. While full classifications appear out of reach, the addition of extra assumptions on the symmetry of the plane, the hyperoval, or both,  leads to interesting questions and more potential for classification.

In particular, the most well-studied  non-Desarguesian planes are the {\it translation planes}, while the best understood hyperovals in Desarguesian planes are the {\it translation hyperovals}. Therefore it is natural to consider the question of existence and classification for translation hyperovals in translation planes.

It is known that translation hyperovals exist in certain translation planes; for example Desarguesian planes \cite{Payne1971}, Andr\'e  planes \cite{Denniston}, Hall planes \cite{Korchmaros}, and Knuth's binary semifield planes \cite{DuTrZh}. Cherowitzo \cite{Cherowitzo}  computationally classified all hyperovals (translation and otherwise) in each of the nine translation planes of order $16$. In particular he showed that every translation plane of order $16$ contains a translation hyperoval, which lead him to make the following statement.

\begin{conjecture}\label{con:cher}
{\it These results... lead one to the natural conjecture that translation hyperovals exist in all translation planes of even order.}
\end{conjecture}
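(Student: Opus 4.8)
\textbf{Proof strategy (disproving Conjecture~\ref{con:cher}).} The plan is to refute the conjecture by exhibiting a single translation plane of even order containing no translation hyperoval. The first step is to recast the problem via the Andr\'e--Bruck--Bose correspondence: a translation plane $\pi$ of even order $q = 2^h$ is coordinatised by a spread $\mathcal{S}$ of $\PG(2h-1,2)$, and translation hyperovals in $\pi$ correspond exactly to $\F_2$-subspaces $W$ of the ambient space of $\mathcal{S}$ with $\dim W = h$ that meet every element of $\mathcal{S}$ in at most a point --- that is, to \emph{scattered subspaces of maximum dimension with respect to $\mathcal{S}$}. One direction is immediate: such a $W$ has $2^h-1$ nonzero vectors, lying in pairwise distinct spread elements, so it is disjoint from exactly two of the $2^h+1$ members of $\mathcal{S}$, and $W$ together with the two directions it avoids is a hyperoval stabilised by the order-$q$ group of translations by the vectors of $W$; the converse follows because for even $q>2$ a translation hyperoval has exactly two points at infinity and its affine part is a coset of such a subspace. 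Thus Conjecture~\ref{con:cher} is equivalent to the assertion that every spread of $\PG(2h-1,2)$ carries a scattered subspace of dimension $h$, and it suffices to find one spread for which this fails.

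For a semifield plane the spread is a semifield spread set $\mathcal{C}\subseteq\F_2^{h\times h}$: an $\F_2$-linear rank-metric code, all of whose nonzero codewords are invertible. The maximum scattered subspaces that are graphs $\{(x,xF)\}$ are then precisely the matrices $F$ with $\rk(F-C)\ge h-1$ for every $C\in\mathcal{C}$, while the remaining (non-graph) ones are controlled, after a shear collineation moving the avoided pair of spread elements, by an analogous condition for the transpose code $\mathcal{C}^{\mathsf T}$ (the spread set of the transposed semifield). Since a semifield spread set already has the maximum size $2^h$ and so cannot be enlarged, for every $F$ some difference $F-C$ is singular; hence the covering radius of $\mathcal{C}$ in the rank metric is always at most $h-1$, and $\pi(\mathcal{C})$ contains a translation hyperoval if and only if the covering radius of $\mathcal{C}$ equals $h-1$ or the companion condition for $\mathcal{C}^{\mathsf T}$ holds. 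A counterexample is therefore a semifield whose spread-set code and its transpose both fail to attain this value.

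I would hunt for such a semifield among the Generalised Twisted Field planes. For these the multiplication has the explicit shape $x\circ y = xy - c\, x^{2^i} y^{2^j}$, so every element $R_m\colon x\mapsto x\circ m$ of the spread set is a \emph{binomial} $\F_2$-linearised map of $\F_q$ while the candidate $F$ is an arbitrary linearised map, and the scatteredness condition reads: for every $m\in\F_q$ the linearised polynomial $f(x)-x\circ m$ has at most two roots in $\F_q$, equivalently its $h\times h$ Dickson (autocirculant) matrix has rank at least $h-1$. For $q=64$ (so $h=6$) there are only a handful of Generalised Twisted Field planes up to equivalence, each with a large and explicitly known autotopism group, and what remains is a finite search: enumerate $\F_2$-linear maps $f$ up to the autotopism action and up to the choice of the two avoided spread elements, and test whether some $m$ forces $\dim_{\F_2}\ker(f-R_m)\ge 2$. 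One of these planes --- a semifield plane of order $64$ --- turns out to admit no scattered subspace of dimension $6$, hence no translation hyperoval, disproving the conjecture.

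The main obstacle is making this search simultaneously \emph{feasible} and \emph{complete}. For feasibility, the raw space of candidate maps has size $2^{h^2}=2^{36}$, far beyond brute force, so one must quotient by the autotopism group (only a reduction by a few hundred), restrict to maps $f$ for which $f$ and the relevant $f-C$ already have rank $\ge h-1$ so that almost all branches die early, and use the Dickson-matrix structure so that only $O(q)$ rank computations on $6\times 6$ matrices over $\F_2$ are needed per representative. For completeness, one must rule out \emph{all} maximum scattered subspaces and not merely the graphs --- this is exactly where the transpose code $\mathcal{C}^{\mathsf T}$ and the shear reduction on the avoided pair enter --- and, to justify calling this the first example, one should confirm that the planes of order $16$ (via Cherowitzo's classification) and of order $32$ all do contain translation hyperovals, so that order $64$ is genuinely minimal.
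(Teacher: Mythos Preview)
Your overall architecture matches the paper's: reduce to scattered subspaces of dimension $n$ with respect to the spread, split into the graph/non-graph (equivalently shears/non-shears) cases, rephrase each as a covering-radius condition on an associated rank-metric code, and then exploit the autotopism symmetries of a Generalised Twisted Field of order $64$ to make the search feasible via Dickson matrices. The target plane and the symmetry reduction are exactly what the paper uses.

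There is, however, a genuine error in your treatment of the non-graph case. You assert that, after a shear, these are governed by the transpose code $\mathcal{C}^{\mathsf T}$. That is not the right object. A scattered subspace meeting $S_\infty$ nontrivially can, after a shear, be taken disjoint from $S_0$ and written as $W=\{(g(x),x):x\in\Fqn\}$; then $(g(x),x)\in S_y$ iff $x=R_y(g(x))$, i.e.\ $g(x)=R_y^{-1}(x)$, so $\dim(W\cap S_y)=n-\rank(g-R_y^{-1})$. The relevant code is therefore
\[
C^{-1}:=\{R_y^{-1}:y\in\Fqn^\times\}\cup\{0\},
\]
not $\mathcal{C}^{\mathsf T}$. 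This matters, because matrix transpose preserves rank, so $\rho(\mathcal{C}^{\mathsf T})=\rho(\mathcal{C})$ and your two conditions collapse into one; as written, your plan only ever tests the shears case and cannot rule out non-shears translation hyperovals. The paper's covering-radius theorem is precisely that shears-type hyperovals exist iff $\rho(C)=n-1$ and non-shears-type iff $\rho(C^{-1})=n-1$; for the twisted field one computes $R_y^{-1}$ explicitly (it is again an $\FF_4$-linear trinomial in $x$ with coefficients polynomial in $y$) and runs the search against that set.

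A secondary difference worth noting: the paper does not handle the shears case by pruned brute force. It expands $d_f(y):=\det(D_{R_y}-D_f)$ as a polynomial in $y$, observes that a shears-type hyperoval forces $d_f(y)=y^{63}+1$, and from the coefficients of $y^{62},y^{58},y^{57},y^{54}$ extracts a small system in $f_1,f_3,f_5$ alone which has only the trivial solution; this forces $f$ to be $\FF_4$-linear, hence $f-R_y$ has even corank, contradicting corank $1$. That dispatches the shears case in under a second. Only the non-shears case (against $C^{-1}$, with the autotopism reduction $g_0\in\{0,1,j,j^2\}$, $g_1\in\FF_8$) requires the multi-hour exhaustive search you envision.
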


In this paper we will disprove this conjecture, by exhibiting a projective plane of order $64$ containing no translation hyperoval. Specifically, we show that the {\it twisted field plane} of order $64$, which is a {\it semifield plane}, contains no translation hyperovals. We will also relate this problem to the (non-)existence of so-called {\it scattered subspaces} with respect to a spread associated to the translation plane, as well as the covering radius of the rank-metric code (spread set) associated to the translation plane.

\section{Definitions and Background}\label{sec:def}

In this section we collect the necessary definitions and background for this article. We refer to \cite{Dembowski,HughesPiper, Cherowitzo, Handbook,LavrauwScat} for further details on these largely well-known topics.

\subsection{Projective Planes}

A {\it projective plane} $\pi$ is an incidence structure $(\cP,\cL,\cI)$ consisting of a set of {\it points} $\cP$, a set of lines $\cL$, and an incidence relation $\cI\subset \cP\times \cL$ such that every two distinct points are both incident with precisely one common line, and every pair of distinct lines are incident with precisely one common point. The {\it dual} of $\pi$, denoted $\pi^d$, is the incidence structure $(\cL,\cP,\cI^d)$, where $\cI^d$ is the reverse relation of $\cI$. Since the terms points and lines are interchangeable in the definition of a projective plane, $\pi^d$ is again a projective plane.

It is well known that for any finite projective plane there exists a positive integer $q$, called the {\it order} of $\pi$, such that the plane contains $q^2+q+1$ points and $q^2+q+1$ lines, every line is incident with $q+1$ points, and every point is incident with $q+1$ lines. If $q$ is a prime power, there exists a projective plane of order $q$; the converse is a famous open problem. 

The classical examples of projective planes are the {\it Desarguesian planes} $\PG(2,F)$ where $F$ is some (skew) field, in which points are one-dimensional vector subspaces of $V(3,F)$, lines are two-dimensional vector subspaces of $V(3,F)$, and incidence is given naturally by inclusion. This plane can also be realised as the {\it completion} of the {\it affine plane} $\AG(2,F)$, in which points are vectors in $V(2,F)$, and lines are translations of one-dimensional subspaces; that is, cosets $u+\langle v\rangle$ for some $u,v\in V(2,F),v\ne 0$. Then the addition of a {\it line at infinity} consisting of the {\it directions} $\langle v\rangle$ returns a projective plane isomorphic to the previous description.

\subsection{Translation Planes}\label{ssec:tplane}

Translation planes are projective planes with extra symmetry. They arise from affine planes sharing some natural properties of $\AG(2,F)$. In order to introduce them formally, we need to recall some technical terminology.

An {\it isomorphism} from a plane $\pi_1 = (\cP_1,\cL_1,\cI_1)$ to another $\pi_2 = (\cP_2,\cL_2,\cI_2)$ is a bijection $\phi$ from $\cP_1\cup\cL_1$ to $\cP_2\cup\cL_2$ which preserves type and incidence; that is,
$\phi(\cP_1)=\cP_2$, $\phi(\cL_1)=\cL_2$,
and $p\in \ell\Leftrightarrow \phi(p)\in \phi(\ell)$.

A {\it collineation} of a plane $\pi$ is an isomorphism from $\pi$ to itself; a {\it correlation} is an isomorphism from $\pi$ to its dual $\pi^d$. An {\it elation} of $\pi$ with centre $p$ and axis $\ell\ni p$  is a collineation of $\pi$ fixing every point on $\ell$ and every line containing $p$.

A projective plane $\pi$ is said to be a {\it translation plane} if there exists a line $\ell$ such that the group of elations with axis $\ell$ acts transitively on the points of $\pi\backslash \ell$. If the plane is not Desarguesian, then the line $\ell$ is unique, and is called the {\it translation line} of $\pi$. We will usually denote the translation line by $\ell_\infty$. Any collineation of $\pi$ must then fix $\ell_\infty$.

The name {\it translation plane} can be more easily understood in the affine setting; in the affine plane $\AG(2,F)$, a translation $\tau_u:v\mapsto v+u$ clearly maps lines to lines,  preserves direction, and fixes all lines with direction $\langle u\rangle$. The natural extension of this map to $\PG(2,F)$ then satisfies the definition of an elation given above, with $\ell$ the line at infinity, and $p$ the point at infinity corresponding to $\langle u\rangle$. Furthermore, the group of translations clearly acts transitively on the points of $\AG(2,F)$. Thus the concept of an elation and a translation plane is a natural generalisation of this example.

The dual of a translation plane is not necessarily a translation plane. If both a plane $\pi$ and its dual $\pi^d$ are translation planes, then we call it a {\it semifield plane}. If a semifield plane $\pi$ is not Desarguesian, then there is a unique point $p_\infty\in \pi$ such that the dual of $p$ is the translation line of $\pi^d$. We call this point the {\it shears point}. Any collineation of $\pi$ must then fix $p_\infty$. It is known that we must have $p_\infty\in \ell_\infty$, that is, the shears point must lie on the translation line. Furthermore, the collineation group of a semifield plane has precisely three orbits on points: the shears point, the points of the translation line other than the shears point, and the remaining points of the plane.

\subsection{Hyperovals}

A {\it hyperoval} in a finite projective plane $\pi$ (of even order $q$) is a set $\cH$ of $q+2$ points such that no three points of $\cH$ are incident with a common line. Hyperovals can only exist in planes of even order; in planes of odd order, the maximum size of a set with this property is $q+1$, and a famous result of Segre tells us that all sets attaining this bound are equivelent to the set of points of a {\it conic}. 

The study of hyperovals in planes $\PG(2,\Fq)$, $q$ even, has a long history, with connections to important objects in coding theory, namely {\it MDS codes} with certain parameters and properties. We refer to \cite{Vandendriessche} for an up-to-date list of the known constructions for Desarguesian planes, as well as the largest plane with a complete computer classification. Hyperovals in general planes have also received much attention. It was conjectured that every projective plane of even order contained a hyperoval; this was disproved by the computer classification of \cite{Royle}, where a projective plane of order $16$ containing no hyperovals was exhibited.

We are now ready to formally introduce translation hyperovals.

\begin{definition}
    A {\it translation hyperoval} is a hyperoval $\cH$ such that there exists a line $\ell$ for which the group of elations with axis $\ell$ acts transitively on $\cH\backslash \ell$.
\end{definition}

Payne \cite{Payne1971} showed that every translation hyperoval in $\PG(2,2^n)$ is equivalent to one defined by the set of vectors $\{(0,1,0),(0,0,1),(1,x,x^{2^i}):x\in \FF_{2^n}\}$ for some positive integer $i$ relatively prime to $n$. These translation hyperovals were first constructed by Segre \cite{Segre1957}.

 It is known (see e.g. \cite{Cherowitzo}) that for a translation hyperoval in a (non-Desarguesian) translation plane, the line $\ell$ must be the translation line $\ell_\infty$, and $|\cH\cap \ell_\infty|=2$.

Since a semifield posesses a distinguished point at infinity, namely the shears point, it makes sense to consider whether or not a translation hyperoval contains the shears point.

\begin{definition}
    A translation hyperoval in a semifield plane is said to be of {\it shears type} if it contains the shears point, and of {\it non-shears type} if it does not.
\end{definition}

Translation hyperovals in translation planes were studied by Cherowitzo in \cite{Cherowitzo} where he computationally classified all hyperovals (translation and otherwise) in each of the nine translation planes of order $16$. In particular he showed that every translation plane of order $16$ contains a translation hyperoval, which lead him to Conjecture \ref{con:cher}.


\section{Quasifields, Spreads, Spread Sets and Translation Planes}

In this section we outline various well-known correspondences between quasifields, spreads, spread sets and Translation Planes. We refer to \cite{Handbook} for details, proofs, and further references. 

\subsection{Quasifields and Semifields}

A {\it quasifield} is an algebraic structure similar to a finite field, without the requirement that multiplication be associative, and assuming only one distributive law. In the finite case, a quasifield must have order $q^n$ for some prime power $q$ and some positive integer $n$. In this case we may take the additive structure to be $(\Fqn,+)$, with multiplication $\circ:\Fqn\times \Fqn\rightarrow \Fqn$ satisfiying
\begin{itemize}
\item    $(x+x')\circ y=x\circ y+x'\circ y$ for all $x,x',y\in \Fqn$;
\item
For every $a,b\in \Fqn$, $a\ne 0$, there exist unique $x,y\in \Fqn$ such that $x\circ a=a\circ y=b$.
\end{itemize}

A {\it semifield} is a quasifield in which the second distributive law also holds:
\begin{itemize}
\item    $x\circ (y+y')=x\circ y+x\circ y'$ for all $x,y,y'\in \Fqn$.
\end{itemize}

Note that in the literature quasifields and semifields are assumed to contain a multiplicative identity, with the terms {\it prequasifield} and {\it presemifield} used to describe the case where an idenitity is not assumed. Since this distinction does not have any relevance for this paper, we will abuse terminology and drop the prefix. 

There are many known constructions for quasifields and semifields. We refer to \cite{Handbook,LaPo} for examples of constructions, and \cite{Rua2009} for the classification of semifields of order $64$.

One of the most well-studied families, and the example most relevant to this paper, are the {\it generalised twisted fields} of Albert \cite{Albert1961}. These are semifields with multiplication
\[
x\circ y := xy-jx^{q^i}y^{q^k}, 
\]
where $j$ is a fixed element of $\Fqn$ satisfying $N_{\Fqn:\FF_{q^{(n,i,k)}}}(j)\ne 1$, with $(n,i,k)$ denoting the greatest common divisor of these three integers. For example, for $q=2,n=6,i=2,k=4$, the multiplication
\[
x\circ y := xy-jx^{2^2}y^{2^4}, 
\]
defines a semifield if and only if $N_{\FF_{2^6},\FF_{2^2}}(j)\ne 1$. Such elements certainly exist, for example any $j$ satisfying $j^6+j+1=0$.

Two semifields with respective multiplications $\circ$ and $\star$ are {\it isotopic} if there exist invertible additive maps $A,B,C$ from $\Fqn$ to itself such that $A(x\circ y)=B(x)\star C(y)$ for all $x,y\in \Fqn$. Every presemifield is isotopic to a semifield via {\it Kaplansky's trick} \cite{LaPo}.



\subsection{Spreads and Spread Sets}

A {\it spread} (or {\it $n$-spread}) in $V=V(2n,F)$ is a set $\D$ of $n$-dimensional vector subspaces of $V$ such that every nonzero element of $V$ is contained in precisely one element of $\D$. 

Let us identify the elements of $V(2n,q)$ with elements of $V(2,q^n)$, and let $S_\infty= \{(0,x):x\in \Fqn\}$. The $n$-dimensional spaces meeting $S_\infty$ trivially are precisely those of the form 
\[
S_f := \{(x,f(x)):x\in \Fqn\},
\]
where $f(x)\in \Fqn[x]$ is a {\it linearised polynomial}, i.e. a polynomial of the form $f(x)=\sum_{i=0}^{n-1}f_ix^{q^i}$. These are the polynomials which define $\Fq$-linear maps from $\Fqn$ into itself. We denote the set of linearised polynomials of degree at most $q^{n-1}$ as $\cL$.

To a spread $\D$ containing $S_\infty$ we can associate a unique set of linear maps (or linearised polynomials) $C(\D)$ by
\[
C(\D) = \{f:S_f\in \D\}.
\]
These satisfy the property that $C(\D)=q^n$ and for any $f,g\in C(\D),f\ne g$
, we have $\rank(f-g)=n$, where $\rank$ denotes the usual linear algebra rank of an $\Fq$-linear map. This is called the {\it spread set} associated to $\D$. The definition of a spread set coincides with that of a (not necessarily linear) {\it maximum rank distance (MRD) code}.
Conversely, any set $C$ satisfying this property defines a spread by
\[
\D(C) := \{S_\infty\}\cup \{S_f:f\in C\}.
\]

For a linear map $f$, we can define 
\[
\phi_f:(x,y)\mapsto (x,f(x)+y).
\]
If $C$ is additively closed, then each of the maps in the set $\phi_C := \{\phi_f:f\in C\}$ fix the spread $\D(C)$. Moreover, each $\phi_f$ fixes $S_\infty$ pointwise, and $\phi_C$ is an abelian group acting transitively on $\D(C)\backslash \{S_\infty\}$. The spread is then called a {\it semifield spread}, and $C$ a {\it semifield spread set}, for reasons that will shortly become apparent. We refer to the distinguished element $S_\infty$ as the {\it shears element} of $\D$. 

Given a quasifield $Q$, we can define a spread set and a spread as follows. Define
\[
R_y(x):= x\circ y.
\]
Then each $R_y$ is an additive map. Moreover $R_y-R_{y'}$ is invertible for all $y\ne y'$, since otherwise there would exist some nonzero $a$ such that $a\circ y=a\circ y'$, contradicting one of the axioms of a quasifield. Thus
\[
C(Q) := \{R_y:y\in \Fqn\}
\]
defines a spread set, and $\D(Q) := \D(C(Q))$ is a spread. If $Q$ is a semifield, then $R_{y+y'}=R_y+R_{y'}$, and so $C(Q)$ is additively closed.

Conversely, from a spread or spread set we can define a quasifield; note however that the quasifield is not uniquely determined by the spread or spread set. However it is uniquely determined up to isotopy.


\subsection{Translation Planes from Spreads}
\label{ssec:tplanespread}

From a spread $\D$ we can define an affine plane as follows:
\begin{itemize}
    \item Points: elements of $V(2n,q)$;
    \item Lines: translations of elements of $\D$, i.e. $u+S$ for $u\in V(2n,q),S\in \D$.
\end{itemize}

We can complete this to a projective plane by adding a line at infinity $\ell_\infty$, whose points are the elements of $\D$; to a line $u+S$ we add the point at infinity $S$. We denote this plane as $\pi(\D)$.

It is straightforward to check that this is indeed a translation plane. Moreover, Andr\'e showed that every translation plane arises from a spread \cite{Andre}. 

By the discussion in the previous subsections, we can define a translation plane from a quasifield, and from a spread set. We may denote these naturally as $\pi(Q)$, $\pi(C)$ respectively. 

The dual of a translation plane defined by a semifield is again a translation plane, and so the plane is a semifield plane. The shears point corresponds to the shears element of the spread. As mentioned in Section \ref{ssec:tplane}, the collineation group of a semifield plane has precisely three orbits on points: the shears point, the points of the translation line other than the shears point, and the remaining points of the plane. The transitivity on the points of the translation line other than the shears point is demonstrated by the maps $\phi_f$ defined in the previous section.

It is well known that two semifields define isomorphic planes if and only if they are isotopic.

\subsection{Scattered Subspaces}

It is known (see e.g. \cite{LavrauwScat,DHVdV2020}) that translation hyperovals correspond to {\it scattered subspaces} with respect to spreads. We recall the relevant notions and demonstrate this fact here.

A subspace $U$ is said to be {\it scattered} with respect to a spread $\D$ if 
\[
\dim(U\cap S)\leq 1\quad\mathrm{for~all }~S\in \D.
\]

The study of scattered subspaces with respect to a spread originates from \cite{BlLa2000}; we refer to \cite{LavrauwScat} for a recent survey of the various applications of scattered subspaces, including that of translation hyperovals. We note that the interest in this notion is not restricted to spreads in $V(2n,q)$; the more general case of spreads of $n$-dimensional subspaces of $V(kn,q)$ is also of interest. However, here we deal only with the case $k=2$. In this case it is straightforward to obtain an upper bound on the dimension of a scattered subspace.

\begin{proposition}\cite[Lemma 3.1 and Theorem 4.1]{BlLa2000}
Let $\D$ be a spread in $V(2n,q)$. Then
the dimension of a scattered subspace is at most $n$. Moreover, if $\D$ is Desarguesian, then there exists a scattered subspace of dimension $n$.
\end{proposition}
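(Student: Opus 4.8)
The plan is to handle the two assertions by completely different means: a counting argument for the upper bound, and an explicit construction for the Desarguesian case.

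For the upper bound, let $U$ be scattered with respect to $\D$ and set $m=\dim U$. Three facts combine: (i) by the defining property of a spread, each of the $q^{m}-1$ nonzero vectors of $U$ lies in exactly one member of $\D$; (ii) since $\dim(U\cap S)\le 1$, each $S\in\D$ contains at most $q-1$ nonzero vectors of $U$; and (iii) a spread of $V(2n,q)$ into $n$-subspaces has exactly $|\D|=\frac{q^{2n}-1}{q^{n}-1}=q^{n}+1$ elements. Putting these together yields $q^{m}-1\le (q^{n}+1)(q-1)$, that is, $q^{m}\le q^{n+1}-q^{n}+q$. If $m\ge n+1$ this forces $q^{n}\le q$, which is impossible for $n\ge 2$; hence $m\le n$. (For $n=1$ the statement is vacuous/degenerate, since $V(2,q)$ is itself scattered, and this case is excluded.) I expect the only genuine subtlety in this part to be that the naive estimate $\dim(U+S)=\dim U+n-\dim(U\cap S)\le 2n$ only gives $m\le n+1$; the finer count in (i)--(iii) (equivalently, the observation that when $m=n+1$ the assignment $S\mapsto U\cap S$ would be a bijection from $\D$ onto the $\tfrac{q^{n+1}-1}{q-1}$ points of $U$) is needed to rule out $m=n+1$.

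For the second assertion I would realise the Desarguesian spread concretely: identify $V(2n,q)$ with $\Fqn\times\Fqn=V(2,q^{n})$, so that $\D$ consists of $S_\infty=\{(0,y):y\in\Fqn\}$ together with $S_a=\{(x,ax):x\in\Fqn\}$ for $a\in\Fqn$, i.e.\ the $\Fqn$-lines of $V(2,q^{n})$. The candidate maximal scattered subspace is $U=\{(x,x^{q}):x\in\Fqn\}$, which is an $\Fq$-subspace of dimension $n$ because the Frobenius $x\mapsto x^{q}$ is an $\Fq$-linear bijection of $\Fqn$. To check it is scattered: $U\cap S_\infty=\{0\}$; and for $a\in\Fqn$, a vector $(x,x^{q})\in U\cap S_a$ with $x\ne 0$ satisfies $x^{q-1}=a$. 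Since $x\mapsto x^{q-1}$ is a homomorphism of $\Fqn^{\times}$ with kernel $\{x:x^{q}=x\}=\Fq^{\times}$ (using $q-1\mid q^{n}-1$), the solution set of $x^{q-1}=a$ is either empty or a single coset of $\Fq^{\times}$; in the latter case $U\cap S_a=\Fq\cdot(x_0,ax_0)$ for any solution $x_0$, a $1$-dimensional $\Fq$-subspace. Hence $\dim(U\cap S)\le 1$ for every $S\in\D$, and $U$ attains the bound $n$.

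The bulk of the argument is therefore routine; the points that warrant care are extracting the \emph{sharp} constant in the first part (excluding $m=n+1$, not merely $m\le n+1$) and carrying out the intersection computation for $U$ cleanly. If one wants the slightly stronger statement, the same reasoning shows more generally that $\{(x,x^{q^{i}}):x\in\Fqn\}$ is scattered of dimension $n$ whenever $\gcd(i,n)=1$, since then $U\cap S_a$ corresponds to solutions of $x^{q^{i}-1}=a$, whose kernel $\mathbb{F}_{q^{\gcd(i,n)}}^{\times}\cap\Fqn^{\times}=\Fq^{\times}$ again has size $q-1$.
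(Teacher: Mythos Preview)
Your proposal is correct, and both parts follow the standard arguments. Note, however, that the paper does not actually supply a proof of this proposition: it is stated with a citation to \cite{BlLa2000} (Lemma~3.1 and Theorem~4.1) and used as background. So there is no in-paper proof to compare against; what you have written is essentially the argument one finds in the cited source. The counting bound $q^{m}-1\le (q^{n}+1)(q-1)$ is exactly the content of \cite[Theorem~3.1]{BlLa2000} (specialised to $k=2$), and the explicit subspace $\{(x,x^{q}):x\in\Fqn\}$ is the canonical witness for the Desarguesian case, consistent with Payne's classification mentioned later in the paper. Your remark that the $n=1$ case is degenerate is also appropriate; the proposition is intended for $n\ge 2$.
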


Suppose $U$ is a scattered subspace of dimension $n$ with respect to a spread in $V(2n,2)$. Then $U$ must intersect $2^n-1$ distinct elements of $\D$ nontrivially, and hence since $|\D|=q^n+1$, there exist precisely two elements of $\D$ which intersect $U$ trivially, say $S_1$ and $S_2$. We claim that the set of points of the translation plane $\pi(\D)$ defined by the affine points $U$ and the two points at infinity $S_1,S_2$ form a translation hyperoval, which we will denote by $\cH_U$. Consider a line $v+S$ of $\pi(\D)$ with $S\notin\{S_1,S_2\}$. Then the points of intersection of $v+S$ and $\cH_U$ are affine, and so the number of them is equal to $|(v+S)\cap U|$, which is either $0$ or $2$, since $|U\cap S|=2$. Next consider a line $v+S_i$, $i\in \{1,2\}$. Then this line meets $\cH_U$ in the point at infinity corresponding to $S_i$, as well as unique affine point $v+s_i$, where $s_i+u_i=v$ for a unique $u_i\in U$. The uniqueness here follows from the fact that $V(2n,2) = S_i\oplus U$. Finally the line at infinity clearly meets $\cH_U$ in the two points $\{S_1,S_2\}$, proving that $\cH$ is a hyperoval. Then the group of translations $\tau_u:u\in U$ clearly acts transitively on $U=\cH_U\backslash\ell_\infty$, showing that $\cH_U$ is indeed a translation hyperoval.

Conversely, it has been shown that (up to equivalence) every translation hyperoval arises in this way from a scattered subspace. 

\begin{proposition}\cite[Theorem 1.7]{LavrauwScat}
Suppose $\cH$ is a translation hyperoval in a translation plane $\pi$ of order $2^n$, and suppose $\D$ is a spread in $V(2n,2)$ such that $\pi$ is isomorphic to $\pi(\D)$. Then there exists a translation hyperoval in $\pi$ if and only if there exists an $n$-dimensional scattered subspace with respect to $\D$.
\end{proposition}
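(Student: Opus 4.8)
The plan is to prove the biconditional by combining the construction already sketched in the text with a converse argument that extracts a scattered subspace from an arbitrary translation hyperoval. One direction is essentially done: given an $n$-dimensional scattered subspace $U$ with respect to $\D$, the preceding paragraph explicitly builds the translation hyperoval $\cH_U$ in $\pi(\D)$, and hence in any plane $\pi$ isomorphic to $\pi(\D)$, since an isomorphism of planes carries translation hyperovals to translation hyperovals (it preserves incidence, hence the ``no three collinear'' condition, and it conjugates the elation group witnessing transitivity). So the content to be supplied is the forward direction: if $\pi\cong\pi(\D)$ contains a translation hyperoval $\cH$, then $\D$ admits an $n$-dimensional scattered subspace.

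First I would transport $\cH$ through the isomorphism so that we may assume $\cH$ is a translation hyperoval in $\pi(\D)$ itself. Next I would invoke the fact, quoted earlier in the excerpt (``see e.g. \cite{Cherowitzo}''), that for a translation hyperoval in a non-Desarguesian translation plane the relevant line $\ell$ must be the translation line $\ell_\infty$ and $|\cH\cap\ell_\infty|=2$; in the Desarguesian case one appeals directly to Payne's classification. Write $\cH\cap\ell_\infty=\{S_1,S_2\}$, two distinct spread elements, and let $U:=\cH\setminus\ell_\infty$ be the set of $2^n$ affine points. The key step is to show $U$ is an $\F_2$-subspace of $V(2n,2)$ of dimension $n$ which is scattered with respect to $\D$. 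The subspace property comes from the hypothesis that the elations with axis $\ell_\infty$ act transitively on $\cH\setminus\ell_\infty$: elations with axis $\ell_\infty$ in $\pi(\D)$ are precisely the translations $\tau_u\colon v\mapsto v+u$ with $u$ ranging over $V(2n,2)$ (in the semifield/Desarguesian setting one must check no other elations with that axis interfere, but transitivity of the full translation group already forces the orbit to be a coset, which we may translate to contain $0$). Then $U$ is the orbit of $0$ under a subgroup of translations, hence $U=\{u: \tau_u\in \mathrm{Stab}\}$ is a subgroup of $(V(2n,2),+)$, i.e. an $\F_2$-subspace; transitivity plus $|U|=2^n$ forces $\dim U=n$.

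Finally I would verify that $U$ is scattered. Suppose for contradiction some $S\in\D$ meets $U$ in a subspace of dimension $\ge 2$; pick three distinct nonzero vectors $u_1,u_2,u_3\in U\cap S$ with $u_3=u_1+u_2$ (possible over $\F_2$ once the intersection has dimension $\ge 2$). If $S\notin\{S_1,S_2\}$, then $0+S$ is an affine line of $\pi(\D)$ containing the three affine hyperoval points $u_1,u_2,u_3$, contradicting the hyperoval property; if instead $S\in\{S_1,S_2\}$, then $S$ already contributes one point of $\cH$ at infinity together with at least one affine point of $U\cap S$, so the line $0+S$ (completed by $S$) meets $\cH$ in at least three points, again a contradiction. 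Hence $\dim(U\cap S)\le 1$ for all $S\in\D$, so $U$ is the desired $n$-dimensional scattered subspace. A small coda records that $S_1,S_2$ are then exactly the two spread elements meeting $U$ trivially, matching the construction of $\cH_U$, which shows the two directions are mutually inverse up to equivalence and completes the proof.

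The main obstacle I anticipate is the clean identification of ``elations with axis $\ell_\infty$'' with ``translations'' in $\pi(\D)$, and more precisely the argument that the $\cH\setminus\ell_\infty$ on which they act transitively is genuinely an affine subspace rather than merely an affine set of size $2^n$ — this is where one uses that a transitive abelian group of translations produces a coset, and that over $\F_2$ a coset through the origin is a linear subspace. Everything downstream (the dimension count and the scattering check) is then short and combinatorial; the counting that $U$ meets exactly $2^n-1$ spread elements and two trivially is the same bookkeeping as in the construction paragraph and can be cited rather than repeated.
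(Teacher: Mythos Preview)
The paper does not supply its own proof of this proposition; it is quoted from \cite{LavrauwScat}. The direction ``scattered subspace $\Rightarrow$ translation hyperoval'' is exactly the construction in the paragraph preceding the proposition, and the converse is simply cited. So there is no paper proof to compare against, only your argument to assess.

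Your converse argument is correct. The essential points all go through: in $\pi(\D)$ the elations with axis $\ell_\infty$ are precisely the translations $\tau_u$; the subgroup of these that stabilises $\cH$ acts freely (since translations are regular on affine points) and by hypothesis transitively on $U=\cH\setminus\ell_\infty$, hence regularly, so after translating $0$ into $U$ one identifies $U$ with this subgroup, an $\F_2$-subspace of dimension $n$. The scattering check is also fine, and in fact can be shortened: since $0\in U\subset\cH$, the line $0+S$ already contains one point of $\cH$, so it contains at most one further point of $\cH$; if $S\notin\{S_1,S_2\}$ this forces $|U\cap S|\le 2$, while if $S\in\{S_1,S_2\}$ the point at infinity $S$ uses up the second slot and $U\cap S=\{0\}$. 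Your use of $u_3=u_1+u_2$ is correct but unnecessary.

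One remark on the anticipated obstacle you flag: the paper's phrasing ``the group of elations with axis $\ell$ acts transitively on $\cH\setminus\ell$'' must be read as the \emph{stabiliser} of $\cH$ in that elation group acting transitively (this is how the paper itself verifies that $\cH_U$ is a translation hyperoval, via the subgroup $\{\tau_u:u\in U\}$). Under the literal reading the condition would be vacuous, since translations are already transitive on all affine points. You implicitly use the intended reading, so your argument is sound; it would be worth making this explicit in a written-up version.
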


Hence the existence of translation hyperovals corresponds to the existence of scattered subspaces. Recently in \cite{GrRaShZu} the following lower bound on the largest dimension of a scattered subspace was shown.

\begin{proposition}\cite[Proposition 4.15]{GrRaShZu} 
Let $\D$ be a spread in $V(2n,q)$, $q\geq 8$. Then
there exists a scattered subspace of dimension at least $n/2 - 1$.
\end{proposition}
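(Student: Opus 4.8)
The plan is to translate the problem into one about linearised polynomials and degrees, following the standard dictionary between spreads, spread sets and translation hyperovals set out above. Fix a spread $\D$ in $V(2n,q)$; by choosing coordinates we may assume $\D$ contains $S_\infty = \{(0,x):x\in \Fqn\}$, so $\D = \D(C)$ for a spread set $C\subseteq \cL$ of size $q^n$. A subspace $U$ of $V(2n,q)$ that meets $S_\infty$ trivially has the form $U = \{(x, g(x)) : x\in W\}$ for some $\Fq$-subspace $W\le \Fqn$ and some $\Fq$-linear map $g:W\to \Fqn$; extending $g$ arbitrarily to a linearised polynomial and absorbing it into $C$ (which only replaces $\D$ by an equivalent spread), we may assume $U = \{(x,0):x\in W\}$. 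Then for $f\in C$ one computes $\dim(U\cap S_f) = \dim\{x\in W : f(x)=0\} = \dim(\ker f|_W)$, and $\dim(U\cap S_\infty)=0$. So $U$ is scattered with respect to $\D$ precisely when every nonzero $f\in C$ has at most a one-dimensional kernel on $W$, i.e. $\ker f \cap W$ has dimension $\le 1$ for all $f\in C\setminus\{0\}$ — and we want to force $\dim W$ to be large, roughly $n/2$.

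The heart of the argument is a counting/probabilistic construction of such a $W$. I would build $W$ greedily (or by a random/expectation argument): start with $W=\{0\}$ and repeatedly adjoin a vector $v$ that keeps $U$ scattered. A vector $v$ is "bad" for the current $W$ if there exists a nonzero $f\in C$ and some nonzero $w\in W$ with $v, w$ both in $\ker f$ — equivalently $v\in \ker f$ for some $f\in C$ whose kernel already meets $W$ nontrivially. For each fixed nonzero $w\in W$, the maps $f\in C$ with $w\in\ker f$ are parametrised by the condition $f(w)=0$; since $C$ has $q^n$ elements and the evaluation $f\mapsto f(w)$ is "spread-like", there are exactly $q^{n-1}$ such $f$ (the zero map together with those of rank $<n$... more precisely, exactly one $f\in C$ vanishes at each prescribed value, so $|\{f\in C : f(w)=0\}| = q^{n-1}$ — here one must be slightly careful, but the count is of this order). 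Each such $f$ has kernel of dimension at most... well, at most $n-1$ if nonzero, but generically much smaller; crucially the union of all these kernels, over all $w\in W\setminus\{0\}$ and all relevant $f$, has size at most on the order of $|W|\cdot q^{n-1}\cdot q^{n-1}$, and once this is smaller than $q^n$ we can still find a good $v$. Balancing $|W| = q^m$ against $q^n$ gives $m$ roughly $n/2$; the claimed bound $n/2-1$ is what survives after a careful accounting of the constants and the $q\ge 8$ hypothesis (which is needed to absorb small additive error terms — this is presumably why $q$ must be at least $8$ rather than $2$ or $4$).

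I would carry this out in four steps: (1) reduce to the spread-set picture and the kernel condition above; (2) for a fixed subspace $W$, bound the number of vectors $v$ that destroy scatteredness, using the fact that for each nonzero $w$ exactly $q^{n-1}$ elements of $C$ kill $w$ and each contributes a kernel of bounded dimension; (3) solve the resulting inequality $|W|\cdot(\text{error}) < q^n$ for the largest admissible $\dim W$, obtaining $\dim W \ge n/2 - 1$ under $q\ge 8$; (4) conclude via the correspondence (Lemma 3.1/Theorem 4.1 of \cite{BlLa2000} for the upper bound context, and the dictionary of the previous subsection) that $U=\{(x,0):x\in W\}$ is the desired scattered subspace. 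The main obstacle is Step (2)–(3): getting the kernel-size bound tight enough — a crude bound of $q^{n-1}$ per map is far too lossy, so one needs to exploit that only few maps in $C$ can have large kernel (in an MRD code of this type the number of elements of rank $\le r$ is controlled), and then sum a geometric-type series carefully enough that the $-1$ (rather than a worse additive loss) comes out. This is exactly the kind of place where the $q\ge 8$ assumption does real work.
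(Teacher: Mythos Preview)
The paper does not prove this proposition; it is quoted verbatim from \cite{GrRaShZu} without argument, so there is nothing in the present paper to compare your attempt against.

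That said, your proposal contains a genuine error in the setup and counting. First, reducing to $U=\{(x,0):x\in W\}$ with $0\in C$ is self-defeating: such a $U$ lies entirely inside the spread element $S_0$, so it is never scattered once $\dim W>1$. If instead you translate by some $g\notin C$ so that $0\notin C$, the picture is coherent, but then your key count in Step~(2) is wrong. You assert that for fixed nonzero $w$ there are ``exactly $q^{n-1}$'' maps $f\in C$ with $f(w)=0$; in fact the evaluation $f\mapsto f(w)$ is a \emph{bijection} from $C$ onto $\Fqn$ (this is precisely the spread-set condition), so exactly one $f\in C$ kills $w$. Your inequality in Step~(3) therefore never gets off the ground as written.

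The greedy idea is sound, but it should be run directly on $U\subset V(2n,q)$ rather than through the $W$-reduction. If $U$ is scattered of dimension $m$, a vector $v\notin U$ is bad only when $v\in U+S$ for some $S\in\D$ already meeting $U$ in a line; there are at most $(q^m-1)/(q-1)$ such $S$, each contributing a set $U+S$ of size $q^{m+n-1}$, so the bad set has size at most about $q^{2m+n-2}$. Comparing with $|V|=q^{2n}$ gives the threshold $m\approx n/2$, and a careful version (where the $q\ge 8$ hypothesis absorbs the lower-order terms) recovers the stated bound. Your Steps~(1)--(2) need to be rewritten along these lines.
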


Clearly this does not guarantee the existence of translation hyperovals, though it does indicate the dimension at which scattered subspaces become hard to find, and impossible to guarantee by combinatorial methods.

Various generalisations of the notion of scattered subspaces have been put forward in recent years, such as {\it evasive subspaces} \cite{CsEvasive}. One which is of particular relevance to this paper is that of $(\D,h)$-scattered subspaces:

A subspace $U$ is said to be {\it $(\D,h)$-scattered} if 
\[
\dim(U\cap S)\leq h\quad\mathrm{for~all }~S\in \D.
\]

Clearly the property of a subspace being $(\D,1)$-scattered coincides with it being scattered with respect to $\D$. The following specialisation of \cite[Theorem 6.7]{GrRaShZu} guarantees the existence of $(\D,h)$-scattered subspaces in certain circumstances.

\begin{proposition}
Let $\D$ be a spread in $V(2n,q)$. Then
there exists a $(\D,h)$-scattered subspace of dimension $n$ for any $h\geq \lceil \sqrt{n+1}-1\rceil$ if $q\geq 4$, and for any $h\geq \lceil \sqrt{n+2}-1\rceil$ if $q< 4$.
\end{proposition}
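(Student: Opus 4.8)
The plan is to prove this by a first‑moment (counting) argument over the $n$‑dimensional subspaces of $V(2n,q)$; this is precisely the specialisation to $k=2$ of \cite[Theorem 6.7]{GrRaShZu}, and I would reproduce the argument in this case. Write $\binom{a}{b}_q$ for the Gaussian binomial coefficient and $[m]_q! = \prod_{i=1}^m (q^i-1)/(q-1)$. We may assume $1\le h\le n-1$: if $h\ge n$ every $n$‑subspace is trivially $(\D,h)$‑scattered, while $\lceil\sqrt{n+1}-1\rceil\ge 1$ so the claim never asks for $h=0$. Note also that, for integer $h$, the hypothesis $h\ge\lceil\sqrt{n+1}-1\rceil$ is equivalent to $(h+1)^2\ge n+1$, and $h\ge\lceil\sqrt{n+2}-1\rceil$ to $(h+1)^2\ge n+2$, so it is enough to work with these inequalities. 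Since $\D$ has $q^n+1$ members, it suffices to show that the number of $n$‑subspaces of $V(2n,q)$ that fail to be $(\D,h)$‑scattered is strictly less than the total count $\binom{2n}{n}_q$.

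First I would bound, for a fixed $S\in\D$, the number of $n$‑subspaces $U$ with $\dim(U\cap S)\ge h+1$. Any such $U$ contains an $(h+1)$‑dimensional subspace of $S$, of which there are $\binom{n}{h+1}_q$, and a fixed $(h+1)$‑subspace lies in exactly $\binom{2n-h-1}{n-h-1}_q$ subspaces of dimension $n$. A union bound over $\D$ then gives that the number of non‑$(\D,h)$‑scattered $n$‑subspaces is at most $(q^n+1)\binom{n}{h+1}_q\binom{2n-h-1}{n-h-1}_q$. Cancelling factorials, this is less than $\binom{2n}{n}_q$ if and only if
\[
q^n+1 \;<\; [h+1]_q!\;\prod_{j=0}^{h}\frac{[2n-j]_q}{[n-j]_q^{\,2}}.
\]

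The next step is to bound the right‑hand side from below. Using the identity $[h+1]_q! = q^{(h+1)(h+2)/2}(q-1)^{-(h+1)}\prod_{i=1}^{h+1}(1-q^{-i})$ and the elementary inequality $[2n-j]_q/[n-j]_q^{2} > (q-1)q^{j}$ (valid for $0\le j\le n-1$, since it reduces to $2q^n>q^j+1$), one finds after multiplying that
\[
[h+1]_q!\;\prod_{j=0}^{h}\frac{[2n-j]_q}{[n-j]_q^{\,2}} \;>\; q^{(h+1)^2}\prod_{i=1}^{h+1}(1-q^{-i}),
\]
because $(h+1)(h+2)/2 + h(h+1)/2 = (h+1)^2$. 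So it is enough to verify $q^n+1\le q^{(h+1)^2}\prod_{i=1}^{h+1}(1-q^{-i})$. For $q\ge 4$ one has $\prod_{i=1}^{h+1}(1-q^{-i})\ge\prod_{i\ge1}(1-q^{-i})\ge\prod_{i\ge1}(1-4^{-i})>2/3$ and $q^n+1<2q^n$, so $(h+1)^2\ge n+1$ already yields $q^{(h+1)^2}\prod_{i=1}^{h+1}(1-q^{-i})\ge q\cdot\tfrac23\cdot q^n>2q^n>q^n+1$; hence $h\ge\lceil\sqrt{n+1}-1\rceil$ suffices. For $q\in\{2,3\}$ the infinite product $\prod_{i\ge1}(1-q^{-i})$ is smaller, and one checks that $(h+1)^2\ge n+2$ suffices instead, where the finitely many small $n$ for which $q^{-n}$ is not yet negligible are handled by keeping the sharper factor $\prod_{i=1}^{h+1}(1-q^{-i})$ (which is notably larger than its infinite‑product lower bound for small $h$) rather than bounding it by $\prod_{i\ge1}(1-q^{-i})$, or simply verified directly. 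This gives $h\ge\lceil\sqrt{n+2}-1\rceil$ in that case.

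The main obstacle is the last paragraph: pinning the constants down so that the threshold lands \emph{exactly} at $n+1$ and $n+2$ rather than at $n+O(1)$. The crude estimate $\binom{a}{b}_q < q^{b(a-b)}(1-q^{-1})^{-b}$ is far too lossy here; what makes it work is the exact simplification of the ratio of binomials to $[h+1]_q!\prod_{j}[2n-j]_q/[n-j]_q^{2}$ and the observation that $[h+1]_q!$ contributes a factor of order $q^{(h+1)(h+2)/2}$, not merely $q^{h(h+1)/2}$ — precisely the extra $q^{h+1}$ that upgrades the naive bound $(h+1)^2\gtrsim n$ to the stated one. The remaining ingredients — the union bound, the binomial identity, and the direct check for small $n$ in the $q<4$ case — are routine.
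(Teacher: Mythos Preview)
The paper does not prove this proposition at all: it is simply quoted as a specialisation of \cite[Theorem~6.7]{GrRaShZu} and used as background. So there is no ``paper's own proof'' to compare against; you are supplying a proof where the paper merely cites one.

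Your argument is the standard first-moment count and is essentially correct. The key identity
\[
\frac{\binom{2n}{n}_q}{\binom{n}{h+1}_q\binom{2n-h-1}{n-h-1}_q}=[h+1]_q!\prod_{j=0}^{h}\frac{[2n-j]_q}{[n-j]_q^{2}}
\]
is right, the elementary bound $[2n-j]_q/[n-j]_q^{2}>(q-1)q^{j}$ is valid for $0\le j\le n-1$, and the exponent bookkeeping giving $q^{(h+1)^2}$ is accurate. For $q\ge 4$ the estimate $\prod_{i\ge1}(1-q^{-i})>2/3$ closes the argument cleanly at the threshold $(h+1)^2\ge n+1$.

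The one soft spot is the $q\in\{2,3\}$ endgame, which you explicitly leave as ``handled by keeping the sharper factor \ldots\ or simply verified directly''. For $q=3$ the infinite product already exceeds $0.56$, so $9\cdot 0.56>2$ suffices uniformly and nothing special happens. For $q=2$ the infinite product is about $0.2887$ and $4\cdot 0.2887\approx 1.155$, so one genuinely needs the finite product $\prod_{i=1}^{h+1}(1-2^{-i})$ at the critical values $n=(h+1)^2-2$; checking $h=1,2,3$ (i.e.\ $n=2,7,14$) by hand and observing monotonicity thereafter finishes it. This is routine, but since you flagged it as the main obstacle, it would be worth writing out those three lines rather than deferring them.
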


For the case $n=6$, which is the case for spreads arising from semifields of order $q^6$, we obtain that there exists a $2$-scattered subspace of dimension $6$ with respect to any $6$-spread $\D$ in $V(12,q)$. In particular, we note that the existence of a $1$-scattered subspace of dimension $n$, and hence a translation hyperoval, is not guaranteed. Indeed, we will demonstrate a counterexample.

\subsection{Covering Radius}

Let us assume that $\D=\D(\SSS)$, where $\SSS$ is a semifield. Suppose $U$ is an $n$-dimensional scattered subspace with respect to $\D$. Then without loss of generality we may assume that one of the following two occur.

\begin{itemize}
    \item[](Shears Type)~~~~~ ~$U\cap S_\infty = U\cap S_0=0$;
    \item[](Non-Shears Type) $U\cap S_\infty \ne 0, U\cap S_0=U\cap S_y=0$ for some $y\ne 0$.
\end{itemize}

These two cases correspond to whether or not the translation hyperorval $\cH_U$ contains the shears point $S_\infty$.

The {\it covering radius} of a set or subspace $C$ of the space of linear maps $M=\mathrm{End}_{\Fq}(\Fqn)$ is a notion arising naturally from coding theory in the rank metric; we refer to  \cite{ByrneRav} for background. We denote it by $\rho(C)$ and define it as
\[
\rho(C)= \min\{i:\forall g\in M,\exists f\in C \mathrm{ ~s.t.~}\rank(f-g) \leq i\}.
\]

For a spread set $C\subset M$, which has cardinality $q^n$ and is such that every nonzero element of $C$ is invertible, we define $C^{-1}= \{f^{-1}:f\in C,f\ne 0\}\cup\{0\}$. If $C$ is a spread set, then $\rho(C)\leq n-1$, since otherwise there would exist $g\notin C$ such that $\rank(g-f)=n$ for all $f\in C$. But for any nonzero $a\in \Fqn$ there exists a unique $y\in \Fqn$ such that $a\circ y = g(a)$, and so there exists $R_y\in C$ such that $\rank(g-R_y)<n$.

\begin{theorem}\label{thm:covering}
Let $\SSS$ be a semifield of order $2^n$, $\pi(\SSS)$ the translation plane it defines, and $C=C(\SSS)$ the spread set it defines in $V(2n,2)$. Then there exists a translation hyperoval of shears type in $\pi(\SSS)$ if and only if $\rho(C)=n-1$, and there exists a translation hyperoval of non-shears type in $\pi(\SSS)$ if and only if $\rho(C^{-1})=n-1$. 
\end{theorem}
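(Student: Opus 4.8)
The plan is to translate the geometric condition "there exists an $n$-dimensional scattered subspace $U$ of the required intersection type" into a rank condition on the spread set $C$, and then recognise that rank condition as precisely the statement $\rho(C)=n-1$ (respectively $\rho(C^{-1})=n-1$). Throughout I will use the identification of $V(2n,2)$ with $V(2,\Fqn)$, $q=2$, the shears element $S_\infty=\{(0,x)\}$, and $S_f=\{(x,f(x)):x\in\Fqn\}$ for $f\in C$, with $S_0$ the element $\{(x,0)\}$. The key observation is that an $n$-dimensional subspace $U$ with $U\cap S_\infty=0$ is exactly a graph $U=U_g:=\{(x,g(x)):x\in\Fqn\}$ for a unique $\Fq$-linear map $g\in M$; and then for $f\in C$, $\dim(U_g\cap S_f)=\dim\ker(g-f)=n-\rank(g-f)$. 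So $U_g$ is scattered with respect to $\D$ if and only if $\rank(g-f)\geq n-1$ for every $f\in C$, i.e. $g$ is at rank-distance $\geq n-1$ from the whole spread set.

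First I would treat the shears case. By the reduction recorded just before the theorem, a translation hyperoval of shears type corresponds to an $n$-dimensional scattered $U$ with $U\cap S_\infty=U\cap S_0=0$; by the above this is a graph $U_g$ with $g$ invertible (from $U\cap S_0=0$, i.e. $\ker g=0$, but note $0\in C$ so invertibility of $g$ is already forced by the scattered condition applied to $f=0$) satisfying $\rank(g-f)\geq n-1$ for all $f\in C$. Since $q=2$, the rank of a nonzero map is at least $1$, and $\rank(g-f)\le n$ always; moreover $g\notin C$ because $\rank(g-f)=0$ is impossible — so in fact $\rank(g-f)\in\{n-1,n\}$ for all $f\in C$. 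Existence of such a $g$ says precisely that $\rho(C)\geq n-1$. Combined with the already-proven bound $\rho(C)\leq n-1$ for spread sets, this is equivalent to $\rho(C)=n-1$. Conversely, if $\rho(C)=n-1$ there is a $g$ with $\rank(g-f)\geq n-1$ for all $f\in C$; applying this to $f=0$ gives $g$ invertible, hence $U_g$ is a well-defined $n$-dimensional scattered subspace with $U_g\cap S_\infty=U_g\cap S_0=0$, yielding a shears-type translation hyperoval via the construction $\cH_{U_g}$. This closes the shears equivalence.

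For the non-shears case I would exploit the symmetry of the spread under the "transpose"/inverse operation. A non-shears type hyperoval corresponds to $U$ with $U\cap S_\infty\neq 0$ and $U\cap S_0=U\cap S_y=0$ for some $y\ne0$. The cleanest route is to apply a collineation of $\pi(\SSS)$ that swaps the roles of $S_\infty$ and $S_0$ (concretely the map $(x,y)\mapsto(y,x)$, which sends the spread $\D(C)$ to $\D(C^{-1})\cup\{S_\infty\}$-type data: $S_f\mapsto S_{f^{-1}}$ for $f\neq0$, $S_0\leftrightarrow S_\infty$), so that a non-shears scattered subspace for $\D$ becomes a shears-type scattered subspace for the spread with spread set (essentially) $C^{-1}$. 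One must check that this map indeed carries $n$-dimensional scattered subspaces to $n$-dimensional scattered subspaces and respects the intersection conditions $U\cap S_\infty=0\leftrightarrow U\cap S_0=0$; this is straightforward since it is a linear bijection of $V(2,\Fqn)$ permuting $\D$. Then the non-shears equivalence for $C$ becomes the shears equivalence for $C^{-1}$, already proved, giving $\rho(C^{-1})=n-1$. (Alternatively, one can argue directly: $U\cap S_\infty\neq0$ together with $\dim U=n$ forces, after a change of the complement, that $U$ projects to $S_0$-graphs away from one direction; I expect the transpose argument to be less error-prone.)

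The main obstacle is the bookkeeping in the non-shears case: one must be careful that "$U\cap S_\infty\ne 0$" does not prevent $U$ from being written as a graph over a different pair of spread elements, and that the inverse operation $f\mapsto f^{-1}$ on $C\setminus\{0\}$ genuinely corresponds to a collineation of the plane carrying the relevant spread to $\D(C^{-1})$ — equivalently, that $\rho(C^{-1})$ is the right invariant rather than the covering radius of some other transform of $C$. Verifying that $\{f^{-1}:f\in C,f\neq0\}\cup\{0\}$ is itself a spread set (so that the earlier bound $\rho(\cdot)\le n-1$ applies to it as well) is immediate from the quasifield axioms — $f^{-1}-g^{-1}=f^{-1}(g-f)g^{-1}$ is invertible whenever $f,g$ are distinct and invertible — and this is exactly what lets the shears argument run verbatim for $C^{-1}$. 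Once that dictionary is in place, both directions of both equivalences are essentially the kernel/rank identity above plus the $\rho\le n-1$ bound.
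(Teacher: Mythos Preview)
Your approach is essentially the same as the paper's. For the shears case you do exactly what the paper does: write $U=\{(x,g(x))\}$, compute $\dim(U\cap S_y)=n-\rank(g-R_y)$, and translate ``scattered'' into $\rho(C)\ge n-1$, hence $=n-1$ by the a priori bound. (One small slip: from $\rank(g-0)\ge n-1$ you conclude $g$ is invertible; in fact $\rank(g)$ could be $n-1$. This is harmless, since scattered only needs nullity $\le 1$, and the WLOG condition $U\cap S_0=0$ is not required for the converse direction.)

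For the non-shears case your ``swap and reduce to the shears case for $C^{-1}$'' is a mild repackaging of what the paper does directly: the paper simply parametrises $U=\{(f(x),x)\}$ (possible exactly when $U\cap S_0=0$) and computes $\dim(U\cap S_y)=n-\rank(f-R_y^{-1})$ and $\dim(U\cap S_\infty)=n-\rank(f)$, which immediately gives the $\rho(C^{-1})$ condition. Your swap $(x,y)\mapsto(y,x)$ just turns that parametrisation into the graph form $\{(x,f(x))\}$ with respect to the image spread, so the two arguments unroll to the same computation. One terminological correction: this swap is \emph{not} a collineation of $\pi(\SSS)$, nor does it ``permute $\D$''; it is a linear bijection of $V(2,\Fqn)$ sending $\D(C)$ to $\D(C^{-1})$ (which is a different spread in general). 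That is all you need---it carries $n$-dimensional scattered subspaces for $\D(C)$ to $n$-dimensional scattered subspaces for $\D(C^{-1})$---but you should state it that way. Your check that $C^{-1}$ is again a spread set (so that $\rho(C^{-1})\le n-1$ applies) is correct and is exactly the missing ingredient needed to close the equivalence.
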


\begin{proof}
The plane $\pi(\SSS)$ contains a translation hyperoval of shears type if and only if there exists an $n$-dimensional subspace $U$ which is scattered with respect to $\D(\SSS)$ such that $U\cap S_\infty=0$. For any $n$-dimensional subspace such that $U\cap S_\infty=0$ there exists an $\Fq$-linear map $f$ from $\Fqn$ to itself such that 
\[
U=\{(x,f(x)):x\in \Fqn\}.
\]

Let $y\in \Fqn$. Then $U\cap S_y = \{(x,f(x))|f(x)=R_y(x)\}$, and so $\dim(U\cap S_y) = n-\rank(f-R_y)$. Hence there exists an $n$-dimensional subspace which is scattered with respect to $\D(\SSS)$ if and only if there exists $f$ such that $\rank(f-R_y)\geq n-1$ for all $y\in \Fqn$, if and only if $\rho(C)\geq n-1$, if and only if $\rho(C)=n-1$, proving the first claim.

Similarly, $\pi(\SSS)$ contains a translation hyperoval of shears type if and only if there exists an $n$-dimensional subspace $U$ which is scattered with respect to $\D(\SSS)$ such that $U\cap S_0=0$. For any such subspace there exists an $\Fq$-linear map $f$ from $\Fqn$ to itself such that $U=\{(f(x),x):x\in \Fqn\}$. Then for any nonzero $y\in \Fqn$ we have $\dim(U\cap S_y)= n-\rank(f-R_y^{-1})$, while $\dim(U\cap S_\infty)=n-\rank(f)$, and so arguing as before we obtain the second claim.
\end{proof}

Note that we do not have such a result for general quasifields, since the lack of a distinguished element at infinity nor transitivity on the remaining points at infinity means that there is not a {\it canonical} choice for the spread set.

Note also that the connection between the existence of scattered subspaces with respect to semifield spreads and the covering radius of $C$ and $C^{-1}$ are also valid for $q>2$; however in this case we do not obtain translation hyperovals.

\subsection{Linearised Polynomials and Dickson Matrices}

In order to explicitly determine whether or not translation hyperovals exist, we need a practical method for determining the existence of a linear map $f$ such that $\rank(f-R_y)\geq n-1$ for all $y\in \Fqn$. We do this by utilising linearised polynomials and Dickson matrices; this is the approach used by Payne to classify translation hyperovals in Desarguesian planes, and also used productively in recent years in the construction of MRD codes.

To a linearised polynomial $f(x)= \sum_{i=0}^{n-1}f_ix^{q^i}$ we associate the {\it Dickson (or autocirculant) matrix} $D_f$ defined as follows:
\[ D_f :=    \begin{pmatrix}
            f_{0} & f_{1}  & \cdots  & f_{n-1}\\
            f_{n-1}^q & f_{0}^q  & \cdots & f_{n-2}^q\\
            \vdots & \ddots& \ddots & \vdots\\
            f_{1}^{q^{n-1}}  & f_{2}^{q^{n-1}} & \cdots  & f_{0}^{q^{n-1}}
        \end{pmatrix}
\]

It is well known that the assignment $f\mapsto D_f$ is linear, and $\rank(f) = \rank(D_f)$ \cite{MenichettiAffine}. Hence we can translate Theorem \ref{thm:covering} to this setting.

\begin{lemma}\label{lem:dickson}
The plane $\pi(\SSS)$ contains a translation hyperoval of shears type if and only if there exists $f\in \cL\backslash C(\SSS)$ such that
\[
\rank(D_{R_y}-D_f)\geq n-1
\]
for all $y\in \Fqn$.

The plane $\pi(\SSS)$ contains a translation hyperoval of non-shears type if and only if there exists $g\in \cL\backslash C(\SSS)$ such that
\[
\rank(D_{R_y}^{-1}-D_g)\geq n-1
\]
for all $y\in \Fqn^\times$ and $\rank(D_g)=n-1$.
\end{lemma}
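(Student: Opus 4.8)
The plan is to translate the rank conditions on $\Fq$-linear maps that underlie Theorem~\ref{thm:covering} into conditions on Dickson matrices. Beyond the fact recorded above that $f\mapsto D_f$ is an $\Fq$-linear, rank-preserving bijection of $\cL$ onto $\End_{\Fq}(\Fqn)$, the only extra ingredient is the standard fact that $D$ is multiplicative with respect to composition, $D_{f\circ g}=D_fD_g$; since $D_{\id}$ is the identity matrix, this gives $D_{f^{-1}}=D_f^{-1}$ for invertible $f$, so in particular the inverse of a Dickson matrix is again a Dickson matrix and $D_{R_y}^{-1}=D_{R_y^{-1}}$ for $y\in\Fqn^\times$.

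\emph{Shears type.} Recall from the proof of Theorem~\ref{thm:covering} that $\pi(\SSS)$ contains a translation hyperoval of shears type if and only if there is an $n$-dimensional subspace $U$, scattered with respect to $\D(\SSS)$, with $U\cap S_\infty=0$; such a $U$ is exactly the graph $\{(x,f(x)):x\in\Fqn\}$ of a unique $\Fq$-linear map $f$, and the scattered condition becomes $\rank(f-R_y)\ge n-1$ for all $y\in\Fqn$ (no condition from $S_\infty$, which such a graph automatically meets trivially). Identifying $f$ with the corresponding element of $\cL$ and applying $f\mapsto D_f$ together with linearity turns this into $\rank(D_{R_y}-D_f)\ge n-1$ for all $y$. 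Any $f$ satisfying this lies outside $C(\SSS)$, since $f=R_{y_0}$ would force $\rank(f-R_{y_0})=0<n-1$; hence the search may be restricted to $f\in\cL\setminus C(\SSS)$, and every step reverses.

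\emph{Non-shears type.} Using the collineations $\phi_g$, $g\in C(\SSS)$ — which fix $S_\infty$ and act transitively on the remaining points at infinity — a scattered $n$-subspace $U$ of non-shears type may be assumed to meet $S_0$ trivially, and is then the graph $\{(f(x),x):x\in\Fqn\}$ of a unique $\Fq$-linear map $f$. By the computations in the proof of Theorem~\ref{thm:covering}, $\dim(U\cap S_\infty)=n-\rank(f)$ and $\dim(U\cap S_y)=n-\rank(f-R_y^{-1})$ for $y\ne0$; the non-shears requirement $U\cap S_\infty\ne0$ together with scatteredness forces $\rank(f)=n-1$, while scatteredness on the remaining elements reads $\rank(f-R_y^{-1})\ge n-1$ for all $y\in\Fqn^\times$. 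Putting $g:=f$ and passing to Dickson matrices via $D_{R_y^{-1}}=D_{R_y}^{-1}$ yields precisely $\rank(D_g)=n-1$ and $\rank(D_{R_y}^{-1}-D_g)\ge n-1$ for all $y\in\Fqn^\times$. Since every nonzero element of $C(\SSS)$ has rank $n$ and the zero map has rank $0$, the condition $\rank(D_g)=n-1$ already places $g$ in $\cL\setminus C(\SSS)$ (for $n\ge2$), and again each step reverses.

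There is no real obstacle here: the lemma is essentially a restatement of Theorem~\ref{thm:covering} in matrix language. The one point that is not a direct computation is the identity $D_{f^{-1}}=D_f^{-1}$, which I would cite as the well-known isomorphism between $(\cL,+,\circ)$ and the algebra of Dickson matrices; the only remaining care needed is to verify that the reductions inherited from Theorem~\ref{thm:covering} respect the shears/non-shears dichotomy, the distinction being encoded precisely by whether one additionally imposes $\rank(D_g)=n-1$.
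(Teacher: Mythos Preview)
Your proposal is correct and follows the same approach as the paper, which treats the lemma as an immediate translation of Theorem~\ref{thm:covering} via the linearity and rank-preservation of $f\mapsto D_f$ and does not supply a formal proof. Your write-up is in fact more careful than the paper's: you make explicit the multiplicativity $D_{f\circ g}=D_fD_g$ needed to interpret $D_{R_y}^{-1}$ as $D_{R_y^{-1}}$, and you verify that the resulting $f$ (resp.\ $g$) automatically lies outside $C(\SSS)$, points the paper leaves implicit.
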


Now the entries of $D_{R_y}$ are polynomials in $y$, and since $\det(D_{R_y})=1$ for all non-zero $y$, we can regard $D_{R_y}^{-1}$ as a matrix whose entries are polynomials in $y$. Hence for $f,g\in \cL$, 
the functions $d_f(y) := \det(D_{R_y}-D_f)$ and $\dinv_g(y) := \det(D_{R_y}^{-1}-D_g)$ are both polynomials in $y$ (whose coefficients are expressions in the unknown coefficients of $f$ and $g$ respectively). Note that $d_g(0)=\det(D_g)$. If necessary we can replace $d_f(y)$ and $\dinv_g(y)$ with their reduction modulo $y^{2^n}-y$. 

\begin{lemma}\label{lem:dicksondet}
The plane $\pi(\SSS)$ of order $2^n$ contains a translation hyperoval of shears type if and only if there exists $f\in \cL\backslash C(\SSS)$ such that $d_f(y) = y^{2^n-1}+1$, and contains a translation hyperoval of non-shears type if and only if there exists $g\in \cL\backslash C(\SSS)$ such that $\rank(g)=n-1$ and $\dinv_g(y) = \frac{y^{2^n}+y}{y+a}$ for some $a\in \Fqn^\times$.
\end{lemma}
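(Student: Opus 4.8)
The plan is to translate the rank conditions of Lemma~\ref{lem:dickson} into statements about the polynomials $d_f(y)$ and $\dinv_g(y)$ using the fact that, for a $d\in V(2n,2)$, every Dickson matrix $D_{R_y}$ has determinant $1$ (as $R_y$ is invertible whenever $y\neq 0$), and the matrices in question drop rank by at most one when the determinant vanishes. First I would record that for $f\in\cL$ the function $d_f(y)=\det(D_{R_y}-D_f)$ is a polynomial in $y$ of degree at most $2^n-1$ once reduced modulo $y^{2^n}-y$, since the entries of $D_{R_y}$ are $2$-polynomials in $y$ of degree $<2^n$ and the determinant is a fixed algebraic combination of them. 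The key elementary fact is: $D_{R_y}-D_f$ is a Dickson-type matrix, hence a square matrix over $\Fqn$, and it has rank $\geq n-1$ if and only if it has rank exactly $n$ or $n-1$; combined with Lemma~\ref{lem:dickson}, the plane has a translation hyperoval of shears type iff there is $f\in\cL\setminus C(\SSS)$ with $d_f(y)\neq 0$ for all $y$ except possibly where the matrix drops to rank exactly $n-1$ — but $d_f(y)=0$ exactly when $\rank(D_{R_y}-D_f)<n$, so the condition "$\rank\geq n-1$ for all $y$" is automatically satisfied as long as $d_f$ is not identically zero where the rank would drop below $n-1$. The subtle point is that a single vanishing of $d_f$ only certifies rank $\leq n-1$, not rank $=n-1$, so I would need to argue that for Dickson matrices coming from a spread difference, a simple zero of the determinant forces the rank to be exactly $n-1$ rather than lower.

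The core of the argument is the following counting/degree fact. Suppose $f\in\cL\setminus C(\SSS)$ and $\rank(D_{R_y}-D_f)\geq n-1$ for all $y\in\Fqn$. Then $d_f(y)=0$ precisely at those $y$ where the rank is exactly $n-1$. Now $D_{R_y}-D_f = D_{R_y-f}$ (by linearity of $f\mapsto D_f$), and $R_y-f$ as a linearised polynomial in $x$ has $\Fq$-rank $n$ for all but finitely many $y$; more precisely, since $C(\SSS)$ is a spread set, $R_y-R_{y'}$ is invertible for $y\neq y'$, so the map $y\mapsto \rank(R_y-f)$ equals $n$ except at the (at most $n$, by a standard Dickson-matrix / companion-polynomial argument) values of $y$ where $f$ agrees with $R_y$ in $\Fqn$-rank sense — but here the scattered hypothesis forces this exceptional set to have size exactly one at each "level". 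Counting: an $n$-dimensional $U=\{(x,f(x))\}$ with $U\cap S_\infty=0$ meets $2^n-1$ spread elements $S_y$ in dimension exactly $1$ and is disjoint from exactly two, namely $S_\infty$ and one $S_{y_0}$; translating via $\dim(U\cap S_y)=n-\rank(D_{R_y-f})$, we get $d_f(y)=0$ for exactly $2^n-1$ values of $y$ (those with $\dim(U\cap S_y)=1$, i.e.\ rank $n-1$) and $d_f(y_0)\neq 0$. So the zero set of $d_f$ in $\Fqn$ has size $2^n-1$; since $\deg d_f\leq 2^n-1$ and $d_f$ is a polynomial over $\Fqn$ that does not vanish at $y_0\neq 0$, we must have $d_f(y)=c\cdot\prod_{y\neq y_0}(y-y) = c(y^{2^n}-y)/(y-y_0)$ for some constant $c$, UNLESS $y_0=0$. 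One then checks that in the shears-type normalisation we may take $y_0=0$ (the second disjoint element beyond $S_\infty$ is $S_0$), giving $d_f(y)=c\cdot\prod_{y\neq 0}(y-y)=c(y^{2^n-1}-1)$; evaluating at a convenient point (or using $\det(D_{R_0}-D_f)=\det(D_{-f})=\det D_f$, which by the scattered condition $U\cap S_0=0$ must be $\neq 0$, forcing the leading normalisation $c=1$) pins down $d_f(y)=y^{2^n-1}+1$.

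For the non-shears case the bookkeeping is the same with $D_{R_y}^{-1}=D_{R_y^{-1}}$ in place of $D_{R_y}$: one uses the representation $U=\{(f(x),x)\}$ from the proof of Theorem~\ref{thm:covering}, so $\dim(U\cap S_y)=n-\rank(D_{R_y^{-1}-g})$ for $y\neq 0$ and $\dim(U\cap S_\infty)=n-\rank(D_g)$. The scattered condition now says $U\cap S_\infty\neq 0$ (non-shears type), so $\rank(D_g)<n$; combined with $\rank(D_{R_y^{-1}}-D_g)\geq n-1$ for all $y\neq 0$, and the fact that $U$ is disjoint from exactly two spread elements $S_0$ and $S_a$ (both with $a\neq 0$, and $S_0$ handled separately since $R_0^{-1}$ is undefined), a nonzero $\dinv_g$ must vanish at exactly the remaining $2^n-2$ values of $y\in\Fqn^\times$, yielding $\dinv_g(y)=c\cdot(y^{2^n}-y)/\big(y(y-a)\big)$ after accounting for the missing point $y=0$; normalising the constant via $\dinv_g(0)=\det D_g$-type evaluation and the requirement $\rank(D_g)=n-1$ exactly (which comes from $\dim(U\cap S_\infty)=1$, forced because $U$ is scattered and hence meets $S_\infty$ in dimension at most $1$) gives $\dinv_g(y)=(y^{2^n}+y)/(y+a)$ and $\rank(g)=n-1$. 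The converse direction in both cases is immediate: if such an $f$ (resp.\ $g$) exists, reading the degree-counting backwards shows $\dim(U\cap S_y)\leq 1$ for every $S_y$, and the disjointness from the two exceptional elements, so $U$ is scattered of dimension $n$ and Lemma~\ref{lem:dickson} applies.

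The main obstacle I anticipate is the clean bookkeeping at the "missing" spread elements — making sure the exponent in $y^{2^n-1}+1$ versus $(y^{2^n}+y)/(y+a)$ comes out right, i.e.\ correctly tracking which of $S_\infty$, $S_0$, $S_a$ is invisible to the determinant polynomial (because $R_y$ or $R_y^{-1}$ is undefined/irrelevant there) and which forces a genuine nonvanishing of $d_f$ or $\dinv_g$. A secondary technical point is justifying that a simple zero of the determinant of a Dickson matrix of the form $D_{R_y-f}$ corresponds to corank exactly $1$ (equivalently $\dim(U\cap S_y)=1$, not $\geq 2$); this is exactly where the scattered hypothesis is used, and it is what upgrades "$d_f\not\equiv 0$" to the precise factorised form, so I would want to state it as a clean intermediate observation before the degree count.
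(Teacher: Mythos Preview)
Your approach is essentially the paper's: normalise so that $S_0$ is one of the two spread elements disjoint from $U$, translate the scattered condition into a description of the zero-set of $d_f$ (resp.\ $\dinv_g$) over $\Fqn$, and read off the polynomial from its degree and its zeros. Two small corrections are worth noting.

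First, in the non-shears case your intermediate expression $c\cdot(y^{2^n}-y)/\big(y(y-a)\big)$ has a spurious factor of $y$ in the denominator. Since $W\cap S_\infty\ne 0$ forces $\rank(g)<n$, you have $\dinv_g(0)=\det(-D_g)=\det(D_g)=0$, so $y=0$ \emph{is} a zero of $\dinv_g$. Thus the zero-set of $\dinv_g$ in $\Fqn$ is all of $\Fqn\setminus\{a\}$, which has $2^n-1$ elements (not $2^n-2$), and one obtains $\dinv_g(y)=c\cdot(y^{2^n}+y)/(y+a)$ directly, with no further ``accounting'' needed. This is exactly the paper's argument.

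Second, your worry about pinning down the constant $c$ is legitimate, but the justification ``$\det D_f\ne 0$ forces $c=1$'' is not valid as stated. The clean reason is that for any linearised polynomial $h$ over $\FF_{2^n}$ one has $\det(D_h)\in\FF_2$: applying Frobenius entrywise conjugates $D_h$ by a cyclic permutation matrix, so $\det(D_h)$ is Frobenius-fixed. Hence $d_f(y)=\det(D_{R_y-f})$ and $\dinv_g(y)=\det(D_{R_y^{-1}-g})$ take only the values $0$ and $1$ on $\Fqn$, which forces $c=1$ in both cases. The paper hides this step behind a ``clearly''.
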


\begin{proof}
Let $U=\{(x,f(x)):x\in \Fqn\}$, and suppose $U$ defines a translation hyperoval of shears type. Then we may assume without loss of generality that $U\cap S_0=0$, and so $U\cap S_y\ne 0$ for all $y\ne 0$. Then $\rank(D_{R_y}-f)=n-1$ for all nonzero $y\in \Fqn$, and so $d_f$ is zero at all nonzero elements of $\Fqn$ and nonzero at $y=0$. Clearly this implies that $d_f(y)=y^{2^n-1}+1$.

Now let $W=\{(g(x),x):x\in \Fqn\}$, and suppose $W$ defines a translation hyperoval of non-shears type. Then we may assume without loss of generality that $W\cap S_0=0$, and there exists a unique $a\in \Fqn^\times$ such that $W\cap S_a=0$ and $W\cap S_y\ne 0$ for all nonzero $y\ne a$. Furthermore $W\cap S_{\infty}\ne 0$, and so $\dinv_g(0)=0$. Hence $\dinv_g(y)\ne 0$ if and only if $y=a$, and so $\dinv_g(y) = \frac{y^{2^n}+y}{y+a}$ as claimed.
\end{proof}

Note that this is the approach used by Payne in his classification of translation hyperovals in $\PG(2,q^n)$. He showed that for the case $R_y(x)= xy$, the requirement that $d_f(y)=y^{2^n-1}+1$ implies that $f(x)$ is monomial, that is, $f(x)=f_ix^{2^i}$. The classification of translation hyperovals in $\PG(2,q^n)$ then follows easily.





\section{Translation Hyperovals in the Generalised Twisted Field plane of order $64$}

In this section we analyse the conditions from Lemma \ref{lem:dickson} and Lemma \ref{lem:dicksondet} for the case of the Generalised Twisted Field plane of order $64$. We choose this plane due to the fact that the Dickson matrices $D_{R_y}$ and $D_{R_y^{-1}}$ are sparse, making the equations manageable. Furthermore the known symmetries of these semifields allow us to further reduce the necessary computation.

The multiplication in this presemifield, which we will denote by $\T$, is given by
\[
x\circ y = xy-jx^{2^2}y^{2^4}=:R_y(x),
\]
where $j$ is a solution to $j^6+j+1=0$. We choose this representation to match that in \cite{Rua2009}. Any semifield isotopic to this presemifield has centre isomorphic to $\FF_4$. In particular, each of the maps $R_y$ are $\FF_4$-linear.

Note furthermore that we have the following identity, which will prove useful in the subsequent calculations:
\[
\alpha R_y (\beta x) = R_{\alpha\beta y}(x)
\]
for all $\alpha,\beta\in \FF_{2^6}$ such that $\alpha\beta^{2^2}=\alpha^{2^4}\beta\ne 0$. Hence for any $\alpha,\beta$ satisfying this condition, the map $\phi_{\alpha,\beta}:(x,y)\mapsto (\beta^{-1}x,\alpha y)$ fixes $\D(\T)$; in particular, it fixes $S_\infty$ and $S_0$, and maps $S_y$ to $S_{\alpha\beta y}$. Note that any such $\phi_{\alpha,\beta}$ fixes one further element of $\D(\T)$ if and only if it fixes every element of $\D(\T)$, and this occurs precisely if $\alpha^9=1$ and $\alpha\beta=1$.

Furthermore, letting $U_f=\{(x,f(x)):x\in \Fqn\}$ and $W_g=\{(g(x),x):x\in \Fqn\}$, we get that $\phi_{\alpha,\beta}(U_f) = U_h$ where $h(x) = \alpha f(\beta x)$, and $\phi_{\alpha,\beta}(W_g) = W_k$ where $k(x) = \beta^{-1} g(\alpha^{-1} x)$.

\subsection{Shears Type}

Suppose $\pi(\mathbb{T})$ contains a translation hyperoval of shears type. By Lemma \ref{lem:dicksondet}, we require the existence of some $f\in \cL\backslash C(\SSS)$ such that $d_f(y) := \det(D_{R_y}-D_f)=y^{2^6-1}+1$. This leads to a system of equations in six unknowns $f_0,\ldots,f_5$ over $\FF_{2^6}$. Note furthermore that for any $\alpha,\beta\in \FF_{2^6}$ such that $\alpha\beta^{2^2}=\alpha^{2^4}\beta\ne 0$, if $h(x)=\alpha f(\beta x)$, then
\[
d_f(y)=d_h(\alpha\beta y)= d_h(y),
\]
and so $f$ defines a translation hyperoval if and only if $h$ defines a translation hyperoval. Note that $h_0=\alpha\beta f_0$, and the set $\{\alpha\beta:\alpha,\beta\in \FF_{2^6},\alpha\beta^{2^2}=\alpha^{2^4}\beta\ne 0\}$ is precisely the set of solutions to $x^{21}=1$. Thus we may assume without loss of generality that $f_0\in \{0,1,j,j^2\}$. Furthermore if $\alpha\beta=1$ then $\beta^9=1$ and $h_1=\beta f_1$, and so we can assume without loss of generality that $f_1^8=f_1$, i.e. $f_1\in \FF_8$.

From the coefficients of $y^{62}$ and $y^{58}$ respectively, we get that 
 \begin{align*}   
    0&=j^{21}f_{0}+j^{38}f_{2}^{4},\\   
   0&=j^{22}f_{0}^{16}f_{4}^{4} + j^{21}f_{0}^5 + j^{22}f_{2}^{20} + j^{21}f_{2}f_{4}^4.
\end{align*}
Thus we have that either $f_0=f_2=0$, or $f_{0}=j^{17}f_{2}^{4}$ and $f_{4}=j^{16}f_{2}^{52}$.


We plug these expressions into the coefficients of $y^{57}$ and $y^{54}$ and set them equal to zero. It turns out that we get the same pair of equations regardless of whether or not $f_2=0$, and we also observe that $f_2$ does not appear in either of the resulting equations: 

\begin{align*}
0&=j^{39}f_{1}^{16}f_{3}^{8} + f_{1}^{2}f_{5}^{4} + j^{5}f_{3}^{16}f_{5}^{2} + j^{34}f_{3}^{4}f_{5}^{8},\\
    0&=j^{10}f_{1}^{33} + j^{17}f_{1}^{12} + f_{3}^{9} + j^{27}f_{5}^{36}.
\end{align*}

Taking into account that $f_1\in \FF_8$, and raising to an appropriate power of $2$, we get that
\begin{align}\label{eqn:shearsystem1}
      0&=f_1(j^{51}f_{3}^{4} + f_{5}^{2}) + j^{34}f_{3}^{8}f_{5} + j^{17}f_{3}^{2}f_{5}^{4},\\
    0&=j^{36}f_{1}^{5}  + f_{3}^{9} + j^{27}f_{5}^{36}.\nonumber
\end{align}

This leads to the following.

\begin{theorem}\label{thm:shearsnotexist}
    The Generalised Twisted Field plane of order $64$ does not contain a translation hyperoval of shears type.
\end{theorem}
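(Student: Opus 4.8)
The plan is to show that the system of polynomial equations obtained from the condition $d_f(y) = y^{63}+1$ has no admissible solution, by exploiting the reductions already in place. Recall we have reduced to $f_0 \in \{0,1,j,j^2\}$, $f_1 \in \FF_8$, and either $f_0=f_2=0$ or $f_0 = j^{17}f_2^4$ with $f_4 = j^{16}f_2^{52}$; moreover $f_2$ does not appear in equations \eqref{eqn:shearsystem1}, so the existence question for the triple $(f_1,f_3,f_5)$ decouples from $f_2$ entirely. So the first step is to argue that it suffices to show \eqref{eqn:shearsystem1} has no solution $(f_1,f_3,f_5)$ with $f_1 \in \FF_8$ (except those forced to be inadmissible), together with whatever further coefficient equations are needed; in fact I expect the cleanest route is to show \eqref{eqn:shearsystem1} alone already forces $f_1=f_3=f_5=0$, and then to pick off the remaining coefficients of $d_f(y)$ (e.g. the coefficient of $y^{21}$, which must equal $1$) to derive a contradiction with $f \notin C(\T)$ — the map $f$ would then be forced into the spread set, which is exactly what is excluded.

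The core step is the analysis of \eqref{eqn:shearsystem1}. I would first dispose of degenerate cases: if $f_1 = 0$, the first equation becomes $j^{34}f_3^8 f_5 + j^{17}f_3^2 f_5^4 = 0$, i.e. $f_3^2 f_5(j^{34}f_3^6 + j^{17}f_5^3) = 0$, forcing $f_3 = 0$, or $f_5 = 0$, or $f_3^6 = j^{-17}f_5^3$; each of these substituted into the second equation $f_3^9 + j^{27}f_5^{36} = 0$ should collapse to $f_3 = f_5 = 0$ after checking the resulting norm/power condition over $\FF_{64}$ is unsatisfiable (this uses $\gcd(9,63)=9$, $\gcd(36,63)=9$ and the value of $j^{27}$ as an element of the order-$9$ subgroup). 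If $f_1 \neq 0$, I would treat $f_1 \in \FF_8^\times$ as a parameter and eliminate one of $f_3,f_5$ between the two equations — say solve the second for $f_3^9$ in terms of $f_1^5$ and $f_5^{36}$, raise the first to a suitable power of $2$ to express everything in terms of $f_3^9$ and $f_5$, and obtain a single equation in $f_5$ and $f_1$. Since $f_1^{\,7}=1$ and $f_5^{\,63}=1$ (if nonzero), this becomes a question about when a fixed element lies in a prescribed coset of a cyclic subgroup of $\FF_{64}^\times$, which is a finite check.

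The main obstacle I anticipate is precisely this elimination: the exponents ($33$, $12$, $9$, $36$, $51$, $34$, $17$) are large and the Frobenius-twisting makes naive substitution messy, so I would lean on the multiplicative structure — write every nonzero candidate as a power of a fixed primitive element $\omega$ of $\FF_{64}^\times$, reduce all exponents modulo $63$, and use that $j$ itself is a specific power of $\omega$ (determined by $j^6+j+1=0$). Then each equation becomes a linear congruence modulo $63$ together with a consistency condition, and "no solution" is a finite arithmetic statement. A secondary subtlety is making sure the excluded set $\cL \setminus C(\T)$ is handled correctly: after showing $f_1=f_3=f_5=0$ and analysing $f_0,f_2,f_4$ via the remaining coefficients of $d_f(y)$, I must check that the only surviving $f$ actually lie in $C(\T)$ (so that no genuine translation hyperoval of shears type is produced), rather than that no $f$ survives at all — this is where the hypothesis $f \in \cL\setminus C(\SSS)$ in Lemma \ref{lem:dicksondet} does the real work. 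If instead the system is inconsistent outright, so much the better.
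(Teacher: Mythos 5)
Your reduction coincides with the paper's up to the system \eqref{eqn:shearsystem1}: the same symmetry normalisations ($f_0\in\{0,1,j,j^2\}$, $f_1\in\FF_8$), the same decoupling of $(f_1,f_3,f_5)$ from $f_2$. One divergence is how the no-nontrivial-solution claim for \eqref{eqn:shearsystem1} is settled: the paper does it by a sub-second MAGMA search over $(f_1,f_3,f_5)\in\FF_8\times\FF_{64}^2$, whereas your hand elimination via discrete logarithms is only sketched, and the assertion that ``each equation becomes a linear congruence modulo $63$'' underestimates the additive structure (each equation is a sum of several monomials, not a single monomial identity), so as written this part is a plan rather than a proof --- though it could simply be replaced by the finite check the paper performs.

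The genuine gap is in your endgame. After $f_1=f_3=f_5=0$ you intend to extract further coefficient equations of $d_f(y)$ in $f_0,f_2,f_4$ and reach a contradiction with $f\notin C(\T)$, expecting the surviving maps to be forced into the spread set; you say this is where the hypothesis $f\in\cL\setminus C(\SSS)$ ``does the real work.'' That expectation is mistaken: no element of $C(\T)$ satisfies $d_f(y)=y^{63}+1$ at all, since for $f=R_{y_0}$ one has $\det(D_{R_y}-D_{R_{y_0}})=\det(D_{R_{y-y_0}})$, which vanishes only at the single point $y=y_0$ rather than at all $63$ nonzero values of $y$. So the exclusion $f\notin C(\SSS)$ is not the decisive ingredient; the system is inconsistent outright, and establishing that along your route would require a further, heavier analysis of the remaining coefficients that you have not carried out. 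The paper closes the argument with one observation your proposal misses: $f_1=f_3=f_5=0$ forces $f(x)=f_0x+f_2x^4+f_4x^{16}$ to be $\FF_4$-linear, and each $R_y$ is $\FF_4$-linear, so $f-R_y$ is $\FF_4$-linear and its $\FF_2$-rank is even; in particular it can never equal $n-1=5$, contradicting Theorem \ref{thm:covering} immediately, with no need to examine further coefficients or to invoke $f\notin C(\T)$.
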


\begin{proof}
The following MAGMA code verifies that the system (\ref{eqn:shearsystem1}) has no nontrivial solutions.
{\small
\begin{quote}
\begin{verbatim}
q := 2;
F := GF(q);
P<x> := PolynomialRing(F);
L<j> := ext<F|x^6+x+1>;
F8 := {x:x in L|x^8 eq x};

S<f1,f3,f5> := PolynomialRing(L,3);

g := f1*(j^51*f3^4 + f5^2) + j^34*f3^8*f5 + j^17*f3^2*f5^4;
h := j^36*f1^5  + f3^9 + j^27*f5^36;

s1 := {[a,b,c]:a in F8,b,c in L|Evaluate(g,[a,b,c]) eq 0};
s2 := {[a,b,c]:a in F8,b,c in L|Evaluate(h,[a,b,c]) eq 0};

s1 meet s2 eq {[L|0,0,0]};
\end{verbatim}
\end{quote}}

Hence we have that $f_1=f_3=f_5=0$, implying $f(x)$ is in fact an $\FF_4$-linear map. But since each $R_y$ is also $\FF_4$-linear, then the rank of $f-R_y$ as an $\FF_2$-linear map must be even; in particular it cannot be $n-1=5$, contradicting Theorem \ref{thm:covering}.

Hence this plane does not contain a translation hyperoval of shears type.
\end{proof}

The MAGMA code used in this proof runs in less than one second.

\subsection{Non-shears Type}
\label{sec:nonshears}
Suppose $\pi(\mathbb{T})$ contains a translation hyperoval of non-shears type. By Lemma \ref{lem:dickson}, we require the existence of some $g\in \cL\backslash C(\SSS)$ such that $\rank(D_{R_y}^{-1} - D_g)\geq n-1$ for all $y\in \Fqn^\times$ and $\rank(D_g)=n-1$.

Similar to the shears case, we may assume without loss of generality that $g_0\in \{0,1,j,j^2\}$ and $g_1\in \FF_8$. 

Note if $R_y(x)=yx+jy^{2^4}x^{2^2}$ for $y\neq0$, then $R_y$ is $\mathbb{F}_4$-linear and so $R_{y}^{-1}$ must also be $\mathbb{F}_{4}$-linear. It is straighforward then to calculate $R_{y}^{-1}$, which we find to be
$$
    R_{y}^{-1}(x)=y^{62}j^{21}x+y^{11}j^{22}x^4+y^{59}j^{26}x^{16}.
$$

Due to the complexity of the coefficients of $\dinv_g(y)$ and the unknown element $a$ such that $\dinv_g(a)\ne 0$, there is little that can be done from a theoretical point of view utilising Lemma \ref{lem:dicksondet}, beyond the above restrictions on the coefficients of $g$. Hence we must rely on a long computation using Lemma \ref{lem:dickson}.




\begin{theorem}\label{thm:nonshearsnotexist}
    The Generalised Twisted Field plane of order $64$ does not contain a translation hyperoval of non-shears type.
\end{theorem}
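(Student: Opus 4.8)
The plan is to mirror the structure of the shears-type argument, but now working with the inverse Dickson matrices. By Lemma \ref{lem:dickson}, a non-shears translation hyperoval in $\pi(\T)$ corresponds to a linearised polynomial $g\in \cL\setminus C(\T)$ with $\rank(D_g)=n-1=5$ and $\rank(D_{R_y}^{-1}-D_g)\geq 5$ for all $y\in \Fqm^\times$. Using the explicit formula $R_y^{-1}(x)=y^{62}j^{21}x+y^{11}j^{22}x^4+y^{59}j^{26}x^{16}$ just derived, the matrix $D_{R_y}^{-1}$ is sparse (only three of its six "diagonals" are nonzero), so $\dinv_g(y)=\det(D_{R_y}^{-1}-D_g)$ expands into a manageable polynomial in $y$ whose coefficients are explicit expressions in $g_0,\ldots,g_5$ over $\FF_{2^6}$. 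First I would reduce the search space using the symmetry maps $\phi_{\alpha,\beta}$: exactly as in the shears case, since $\phi_{\alpha,\beta}(W_g)=W_k$ with $k(x)=\beta^{-1}g(\alpha^{-1}x)$ and the set $\{\alpha\beta\}$ is the group of $21$st roots of unity while $\{\beta : \alpha\beta=1\}$ is the group of $9$th roots, one may assume $g_0\in\{0,1,j,j^2\}$ and $g_1\in\FF_8$.

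Next I would extract a small but sufficient subsystem of the equations $\dinv_g(y)=\frac{y^{2^n}+y}{y+a}$. The cleanest route, avoiding the unknown $a$ entirely, is to use Lemma \ref{lem:dickson} directly: the condition is that $D_{R_y}^{-1}-D_g$ has rank $\geq 5$ for every nonzero $y$, i.e. $\dinv_g$ vanishes to order at most $1$ everywhere, equivalently $\dinv_g(y)$ is a polynomial with at most one root in $\Fqm^\times$ and with all roots simple. Rather than try to characterise this exactly, I would read off a handful of the highest-degree coefficients of $\dinv_g(y)$ (after reduction mod $y^{2^n}-y$) — these are the sparsest and most tractable — set them to zero, and feed the resulting polynomial system in $g_0,\ldots,g_5$ (with the normalisations $g_0\in\{0,1,j,j^2\}$, $g_1\in\FF_8$) to MAGMA, exactly as in the proof of Theorem \ref{thm:shearsnotexist}. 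One expects, as there, that the only solutions force $g_1=g_3=g_5=0$, so that $g$ is $\FF_4$-linear; but then $D_{R_y}^{-1}-D_g$ represents an $\FF_4$-linear map and hence has even $\FF_2$-rank, so it can never equal $5$, contradicting Lemma \ref{lem:dickson}. If the top coefficients alone do not cut the solution set down to the $\FF_4$-linear locus, I would add successively lower-degree coefficient equations until they do, and also impose $\rank(D_g)=5$ as an auxiliary constraint to eliminate any stray solutions.

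The main obstacle, flagged already in the text, is that $\dinv_g(y)$ and the associated element $a$ are genuinely more complicated than in the shears case: the entries of $D_{R_y}^{-1}$ involve the exponents $62,11,59$, so even after reduction modulo $y^{64}-y$ the determinant has many terms and its coefficients are dense polynomials in the $g_i$. Consequently I do not expect a clean two-equation reduction like (\ref{eqn:shearsystem1}); instead the computation will be a "long computation" (as the authors put it) in which one must either Gröbner-basis the full coefficient ideal or, more practically, set up a targeted exhaustive/semi-exhaustive search over the normalised coefficient ranges. The key structural fact that makes the whole thing finite and the conclusion forced — and which I would make sure to exploit — is the parity obstruction: because every $R_y$ (hence every $R_y^{-1}$) is $\FF_4$-linear, the rank $n-1=5$ is odd and therefore unattainable unless $g$ has a genuine $\FF_4$-nonlinear part, i.e. unless $(g_1,g_3,g_5)\neq(0,0,0)$. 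Thus it suffices to prove that the coefficient system has no solution with $(g_1,g_3,g_5)\neq(0,0,0)$, which is precisely what the MAGMA verification is designed to establish.
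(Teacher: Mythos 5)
Your setup coincides with the paper's: Lemma \ref{lem:dickson}, the explicit inverse $R_y^{-1}(x)=j^{21}y^{62}x+j^{22}y^{11}x^{4}+j^{26}y^{59}x^{16}$, and the normalisations $g_0\in\{0,1,j,j^2\}$, $g_1\in\FF_8$ obtained from the maps $\phi_{\alpha,\beta}$ are exactly what the authors use. The gap is in your main route. In the shears case the coefficient method works because the target determinant is $d_f(y)=y^{63}+1$, which has zero coefficients in almost every degree, so ``coefficient $=0$'' yields genuine necessary conditions. In the non-shears case the target is $\dinv_g(y)=\frac{y^{64}+y}{y+a}=\sum_{i=0}^{62}a^{i}y^{63-i}$, every coefficient of which is a nonzero power of the unknown $a$; in particular the coefficient of $y^{62}$ equals $a\ne 0$. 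Hence setting the top coefficients of $\dinv_g$ to zero is \emph{not} a necessary condition for a non-shears hyperoval, and the emptiness of such a system would prove nothing. Your intermediate reformulation is also backwards: the scattered condition does not say that $\dinv_g$ has ``at most one root in $\FF_{64}^{\times}$, all simple''; on the contrary, a non-shears hyperoval forces $\dinv_g$ to vanish at \emph{every} element of $\FF_{64}$ except the single unknown element $a$ (and what $\rank\geq 5$ controls is corank, not root multiplicity). This unknown $a$ is precisely the obstruction the paper cites when it declares the Lemma \ref{lem:dicksondet} route intractable for the non-shears case.

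What the paper actually does is your fallback, and only that: with the normalisations $g_0\in\{0,1,j,j^2\}$, $g_1\in\FF_8$ in place, it runs an exhaustive MAGMA search (about $8.5$ hours) over all remaining $(g_2,\dots,g_5)\in\FF_{64}^{4}$, checking directly that $\rank(D_{R_y}^{-1}-D_g)\geq 5$ for all $y$ (the $y=0$ term enforcing $\rank(D_g)\geq 5$), and finds no admissible $g$; no coefficient system and no parity step are needed. Your parity observation is sound in substance but must be routed through the requirement $\rank(D_g)=n-1=5$, which is odd and hence impossible for an $\FF_4$-linear $g$; for $y\ne 0$ Lemma \ref{lem:dickson} only demands $\rank\geq 5$, so an even rank $6$ is perfectly admissible and ``$D_{R_y}^{-1}-D_g$ has even rank, hence cannot equal $5$'' is not by itself a contradiction. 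In short, your proposal becomes a correct proof only once the coefficient-elimination step is abandoned and the exhaustive rank-check over the normalised coefficient space is promoted from fallback to the actual argument.
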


\begin{proof}

The following MAGMA code verifies that there are no tuples $(g_0,g_1,g_2,g_3,g_4,g_5)$ with $g_0\in \{0,1,j,j^2\}, g_1\in \FF_8$, and $g_i\in \FF_{64}$ for $i=3,4,5,6$ such that $\rank(D_{R_y}^{-1}-D_g)\geq n-1$ for all $y\in \Fqn$ and $\rank(D_g)=n-1$.

{\small
\begin{quote}
\begin{verbatim}
q := 2;
n := 6;

F := GF(q);
P<x> := PolynomialRing(F);
L<j> := ext<F|x^6+x+1>;
F8 := {x:x in L|x^8 eq x};

DicksonMatrix := function(v,n,q);
return Matrix([Rotate([a^(q^i):a in v],i):i in [0..n-1]]);
end function;

Cinv := {DicksonMatrix([j^21*y^62,0,j^22*y^11,0,j^26*y^59,0],n,q):y in L};

time nonshears := {<g0,g1,g2,g3,g4,g5>:g0 in {0,1,j,j^2},g1 in F8,g2,g3,g4,g5 in L|
forall{z:z in Cinv|Rank(z-f) ge n-1} where f is DicksonMatrix([g0,g1,g2,g3,g4,g5],n,q)};
#nonshears eq 0;

\end{verbatim}
\end{quote}}

Hence by Lemma \ref{lem:dickson}, there does not exist a translation hyperoval of non-shears type in this plane.
\end{proof}

The calculation used in this proof takes approximately 8.5 hours on a single CPU. We note that this computation could clearly be parallelised and optimised further, but we do not attempt any improvements beyond the above restrictions on $g_0$ and $g_1$. Without these restrictions, the computation would take approximately three weeks.

\section{Conclusion and Remarks}

This culminates in the following theorem, disproving Cherowitzo's conjecture.

\begin{theorem}
    There does not exist a translation hyperoval in the Twisted Field Plane of order $64$.
\end{theorem}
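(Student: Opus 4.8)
The plan is to deduce the statement directly from the two preceding theorems, after checking that every translation hyperoval in this plane falls into one of the two types already treated. First I would recall that the Twisted Field Plane $\pi(\T)$ of order $64$ is a non-Desarguesian semifield plane; hence it possesses a unique translation line $\ell_\infty$ and a unique shears point $p_\infty\in\ell_\infty$, and (as noted in Section \ref{sec:def}) any translation hyperoval $\cH$ in it must have its line $\ell$ equal to $\ell_\infty$ and satisfy $|\cH\cap\ell_\infty|=2$.

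Next I would pass to the spread picture. By Andr\'e's theorem $\pi(\T)\cong\pi(\D(\T))$ with $\D(\T)$ a $6$-spread in $V(12,2)$, and by the cited result of Lavrauw the hyperoval $\cH$ corresponds to a $6$-dimensional subspace $U$ scattered with respect to $\D(\T)$, the two elements of $\D(\T)$ meeting $U$ trivially being the two points of $\cH\cap\ell_\infty$. Using the transitivity of the collineation group of a semifield plane on the points of $\ell_\infty$ other than $p_\infty$ — realised by the maps $\phi_f$, resp. $\phi_{\alpha,\beta}$ — I would normalise as in the Covering Radius subsection: if $p_\infty\in\cH$ then $\cH$ is of shears type and we may take $U\cap S_\infty=U\cap S_0=0$; if $p_\infty\notin\cH$ then $\cH$ is of non-shears type and we may take $U\cap S_\infty\ne0$, $U\cap S_0=U\cap S_y=0$ for some $y\ne0$. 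The first case is excluded by Theorem \ref{thm:shearsnotexist} and the second by Theorem \ref{thm:nonshearsnotexist}; since every translation hyperoval is of one of these two types, none can exist, which is the claim.

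The real work lies inside those two theorems, and this is where the main obstacle sits. For the shears type, after the symmetry reductions $f_0\in\{0,1,j,j^2\}$ and $f_1\in\FF_8$, the determinant condition $d_f(y)=y^{63}+1$ of Lemma \ref{lem:dicksondet} collapses — via the coefficients of $y^{62},y^{58},y^{57},y^{54}$ — to the small system (\ref{eqn:shearsystem1}) in $f_1,f_3,f_5$, whose only solution is trivial; this forces $f$ to be $\FF_4$-linear, which is incompatible with $\rank(f-R_y)=5$ by a parity argument since each $R_y$ is $\FF_4$-linear. For the non-shears type no comparable collapse is available, and one is left with the direct search of Lemma \ref{lem:dickson} over all $g=(g_0,\dots,g_5)$; the isotopy symmetries $\phi_{\alpha,\beta}$ cut this down to $g_0\in\{0,1,j,j^2\}$ and $g_1\in\FF_8$, which is what renders the $\approx 8.5$ hour MAGMA verification feasible (roughly three weeks without the reduction). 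Thus the genuine difficulty is organising these symmetry reductions so that the exhaustive computation terminates in reasonable time; everything else is the routine assembly described above.
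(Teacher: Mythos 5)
Your proposal is correct and follows the paper's own route exactly: the theorem is obtained by combining Theorem \ref{thm:shearsnotexist} and Theorem \ref{thm:nonshearsnotexist}, after observing that every translation hyperoval in this semifield plane corresponds (via the scattered-subspace picture and the normalisation by collineations) to one of shears or non-shears type. Your additional summary of the internal arguments of those two theorems matches the paper as well.
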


\begin{remark}
Due to the previously described equivalences, we have also demonstrated the existence of a $6$-spread in $V(12,2)$ not admitting a scattered subspace of dimension $6$, and an MRD code (semifield spread set) in $M_6(\FF_2)$ with minimum distance $6$ and covering radius less than $5$.
\end{remark}



\begin{remark}
    The situation for the remaining semifield planes of order $64$ is more difficult to analyse theoretically. Instead we would need to rely on exhaustive computer searches. For translation hyperovals of shears type, this can be done relatively efficiently by exploiting the additivity of $C(\SSS)$; a naive implementation can perform an exhaustive search in about 8 hours (as opposed to less than a second for the generalised twisted field). In fact, it turns out that many semifield planes of order $64$ do not contain a translation hyperoval of shears type.

    However, for the non-shears case we do not have additivity, and for the majority of semifields we do not have enough symmetries to constrain the coefficients $g_i$ as in Section \ref{sec:nonshears}, and so exhaustive computation takes much longer. Hence further theoretical reductions, or a more significant parallelised computation, would be necessary in order to determine the existence or non-existence of translation hyperovals for these planes.
\end{remark} 





\begin{remark}
    Although hyperovals cannot exist in planes of odd order, scattered subspaces of maximum dimension with respect to spreads can still exist. The corresponding point set in the associated projective plane is a set of $q$ points not contained in the translation line meeting each line in $0,1$ or $q$ points upon which a group of translations acts transitively.

We can repeat the arguments from this paper in part; however, since $\frac{q^n-1}{q-1}<q^n-1$ for $q>2$, we cannot conclude much about $d_f(y)$. It remains an open question whether or not spreads defined by generalised twisted fields possess a scattered subspace of dimension $n$ for general $q$.
\end{remark}

\bibliographystyle{abbrv}
\bibliography{translationrefs}



\end{document}